\newcommand{\markthis}[3]{
  \overset{
    \textup{\makebox[0pt]{#1}}%
    \def\@currentlabel{#1}%
    \ltx@label{#2}%
  }{
    #3%
  }%
}
\newtheorem*{proposition*}{Proposition}
\newtheorem*{theorem*}{Theorem}
\newtheorem{assumption}{Assumption}
\newtheorem{lemma}{Lemma}
\newtheorem{theorem}{Theorem}
\newtheorem{definition}{Definition}
\newtheorem{problem}{Problem}
\newtheorem{example}{Example}
\newtheorem{proposition}{Proposition}
\definecolor{babypink}{rgb}{0.96, 0.76, 0.76}
\definecolor{bananayellow}{rgb}{1.0, 0.88, 0.21}
\definecolor{amethyst}{rgb}{0.6, 0.4, 0.8}
\definecolor{blizzardblue}{rgb}{0.67, 0.9, 0.93}
\definecolor{aquamarine}{rgb}{0.5, 1.0, 0.83}
\definecolor{aureolin}{rgb}{0.99, 0.93, 0.0}
\definecolor{aqua}{rgb}{0.0, 1.0, 1.0}
\definecolor{caribbeangreen}{rgb}{0.0, 0.8, 0.6}
\definecolor{chartreuse(web)}{rgb}{0.5, 1.0, 0.0}
\definecolor{amber(sae/ece)}{rgb}{1.0, 0.49, 0.0}
\definecolor{apricot}{rgb}{0.98, 0.81, 0.69}
\definecolor{lightmauve}{rgb}{0.86, 0.82, 1.0}
\definecolor{lightsalmon}{rgb}{1.0, 0.63, 0.48}
\definecolor{electricblue}{rgb}{0.49, 0.98, 1.0}
\definecolor{gray(x11gray)}{rgb}{0.75, 0.75, 0.75}
\newcommand{\EE}{\mathbb{E}}
\newcommand{\calI}{\mathcal{I}}
\DeclareMathOperator*{\argmin}{argmin}
\definecolor{green1}{rgb}{0.2,0.7,0.2}
\title{\LARGE \bf Semantic Communication in Multi-team Dynamic Games: A Mean Field Perspective}
\author{Shubham~Aggarwal, Muhammad~Aneeq~uz~Zaman, Melih Bastopcu, \textit{Member, IEEE}, and Tamer~Ba{\c s}ar, \textit{Life Fellow, IEEE}\vspace{-0.5cm}
\thanks{Research of the authors was supported in part by ARO MURI Grant AG285 and in part by AFOSR Grant FA9550-24-1-0152. Research of MB was supported in part by Tubitak Bilgem EDGE-4-IoT and Tubitak 2232-B Fellowship Program under Project No. 124C533.}
\thanks{Shubham Aggarwal and Muhammad Aneeq uz Zaman are with the Coordinated Science Laboratory and the Department of Mechanical Science and Engineering at the University of Illinois Urbana-Champaign (UIUC); Melih Bastopcu is with the Department of Electrical and Electronics Engineering at Bilkent University, Türkiye;  Tamer Ba{\c s}ar is with the Coordinated Science Laboratory and the Department of Electrical and Computer Engineering at UIUC. (Emails:
        {\{sa57, mazaman2, bastopcu, basar1\}@illinois.edu)} 
}
}
\tikzset{ remember picture,
   switch/.style = {rectangle,
                    draw,align=center,
                    label={below:#1},
   },
}
\newsavebox\mybox
\savebox\mybox{%
\tikz\draw[line width=0.7pt] (-0.4,0)--(0,0)
								(0,0)--(0.4,0.4);%
}
\begin{document}
\tikzstyle{rect} = [draw,rectangle,fill = white!20,minimum width = 3pt, inner sep  = 5pt]
\tikzstyle{line} = [draw, -latex]
\tikzstyle{dline} = [draw, dash dot, -latex]

\maketitle
\thispagestyle{empty}

\begin{abstract}
Coordinating communication and control is a key component in the stability and performance of networked multi-agent systems. 
While single user networked control systems have gained a lot of attention within this domain, in this work, we address the more challenging problem of large population multi-team dynamic games.
In particular, each team constitutes two decision makers (namely, the sensor and the controller) who coordinate over a shared network to control a dynamically evolving state of interest under costs on both actuation and sensing/communication. 
Due to the shared nature of the wireless channel, the overall cost of each team depends on other teams' policies, thereby leading to a noncooperative game setup. Due to the presence of a large number of teams, we compute approximate decentralized Nash equilibrium policies for each team using the paradigm of (extended) mean-field games, which is governed by (1) the mean traffic flowing over the channel, and (2) the value of information at the sensor, which highlights the semantic nature of the ensuing communication. In the process, we compute optimal controller policies and approximately optimal sensor policies for each representative team of the mean-field system to alleviate the problem of general non-contractivity of the mean-field fixed point operator associated with the finite cardinality of the sensor action space.
Consequently, we also prove the $\epsilon$--Nash property of the mean-field equilibrium solution which essentially characterizes how well the solution derived using mean-field analysis performs on the finite-team system. 
Finally, we provide extensive numerical simulations, which corroborate the theoretical findings and lead to additional insights on the properties of the results presented. 
\end{abstract}

\section{Introduction}
Games among teams have been an attractive topic of interest in the recent past. These systems consist of cooperating decision-makers (within a team) who aim to collectively optimize the shared team objective, while being affected by the actions of other competing teams. Applications of the same include multi-player perimeter defense games \cite{shishika2020review}, electricity markets \cite{subramanian2023mean}, oligopolies \cite{weintraub2008markov}, and networked robotics \cite{hatanaka2015passivity}, to list just a few.

In this paper, we address the problem of finding equilibrium policies for multiple dynamic teams competing to access a shared wireless channel. A prototype of such a system is shown in Fig.~\ref{Fig:system_model}.  Each team is comprised of a dynamically evolving plant controlled by a sensor-controller pair. The controller relies on state information from the sensor, which communicates over the shared wireless channel. Costs arise from (1) the sensor transmitting data over the network, and (2) the controller implementing control policies for plant stabilization. An effective control policy requires consistent state measurements, which increase communication costs. Conversely, reducing the number of communications lowers these costs, but degrades control performance, leading to a classic communication-control trade-off for each team. Consequently, the sensor and controller act as two active decision-makers within a team, optimizing a common objective to find a team-optimal policy pair.

\begin{figure}[t]
    \centerline{\includegraphics[width=0.6\columnwidth]{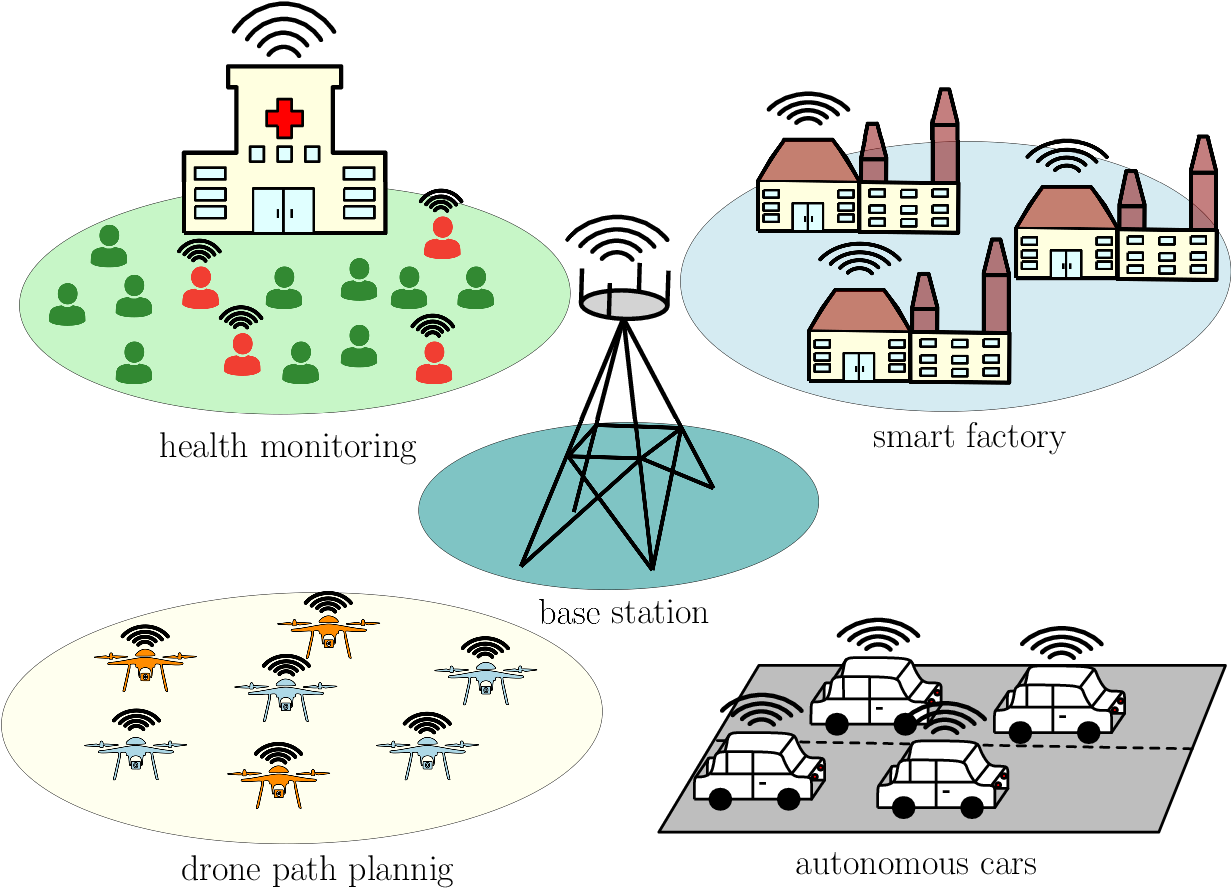}}
    \caption{\small{A multi-agent networked control system operating over a shared wireless resource serves use cases such as connected autonomy, path planning, Industry 4.0, and medical applications.}}
    \label{Fig:system_model}
    \vspace{-0.6cm}
\end{figure}
Furthermore, since the channel is shared, the communication cost is coupled with the transmission activity of other users. We model this coupling effect as the product of sensor action and the average number of communication instances of the other teams. Therefore, if the channel is highly congested, each sensor will seek to reduce its communication, whereas if the channel is underutilized, each sensor will increase its transmissions to improve control performance. This dynamic interaction suggests that a (Nash) equilibrium channel utilization may emerge for each team. However, with a large number of teams, calculating exact Nash policies is challenging because each team needs to be aware of other teams' policies. To address this, we employ the mean-field game (MFG) framework to compute approximate \textit{decentralized} equilibrium policies for each team, which depend only on the local information. This approach involves analyzing the limiting team game problem (with infinite teams), which we refer to as the mean-field team (MFT) system. In the MFT system, individual team deviations become insignificant due to the infinite number of teams, and hence, it can be characterized by a representative team interacting with a mass distribution. This allows us to determine the optimal policy for a generic team, referred to as the mean-field team equilibrium (MFTE) policy, which aligns with the population's behavior. We provide conditions for the existence and uniqueness of an approximate MFTE and demonstrate when the derived equilibrium policies constitute approximate Nash policies for the finite-team system.
\vspace{-0.2cm}
\subsection{Related Works}

\subsubsection{Team optimal control \& Information structures}\label{subsec:Team_optimal}
Team decision problems, which were first investigated in \cite{marschak1955elements,radner1962team}, involve multiple decision makers (DMs) each of whom has access to different information variables and consequently choose policies jointly to incur a common cost/reward. Since each one acts independently, and they do not necessarily share the same information, the joint optimal policy design is highly dependent on the information available to each DM \cite{yuksel2013stochastic,dave2019decentralized,dave2022decentralized}, and its derivation can be challenging particularly when the information is dynamic, as demonstrated by Witsenhausen \cite{witsenhausen1968counterexample}, Feldbaum \cite{feldbaum1961dual}, and Ba\c{s}ar \cite{basar2008variations}. The \textit{single team} communication-control trade-off problem is an instance of such a decision problem, where the sensor and the controller form a 2-DM team, each having access to different information signals within the system. Initial investigations for solving the single team problem in this context were undertaken within the realm of event-driven control, where the primary objective is to merely stabilize the dynamical plant, without any optimality guarantees \cite{tabuada2007event,heemels2012introduction}. Multi-team extensions of the same have also been considered in \cite{dimarogonas2011distributed,seyboth2013event}.

On the other hand, single team settings from the viewpoint of an optimal control problem have also been widely dealt with in literature \cite{imer2005optimal,imer2010optimal,lipsa2011remote,imer2006optimal,imer2006measure,molin2009lqg,maity2020minimal} to study the impact of resource constraints on estimator and controller designs, where the resource is the frequency of usage of the transmission channels for estimation or control purposes. Specifically, papers \cite{imer2005optimal} and \cite{imer2010optimal} have obtained the optimal threshold-type (symmetric) policies for the sensor under a hard constraint on the number of transmissions, with extensions provided in \cite{lipsa2011remote}; 
\cite{imer2006optimal} has placed resource constraints on the interaction between the controller and the actuator in a plant; and \cite{imer2006measure} has
introduced a tradeoff between two options for a controller, which are transmission of control signals to the actuator and receiving measurements, where optimality is again based on threshold-type policies; \cite{molin2009lqg} has provided jointly optimal policies for control and sensing; and \cite{maity2020minimal} has derived suboptimal sensor and control policies based (only) on the statistics of the estimation error.
The latest works in this direction are \cite{soleymani2021valu,soleymani2022value}, where the authors show that for a multi-dimensional Gauss-Markov process, the optimal estimator is linear and is unaffected by the \textit{no-communication events} (as was derived for \textit{scalar} systems earlier in \cite{imer2010optimal} with its optimality proven in \cite{lipsa2011remote}), leading to a globally optimal policy design for a single team problem.

\subsubsection{Finite \& Mean-field team games}
As opposed to the above frameworks which consider a single team problem, there are only a handful of works considering games among multiple teams\cite{hogeboom2023zero,lagoudakis2002learning,ghimire2023solving,maity2017linear}. Specifically, the paper \cite{hogeboom2023zero} has investigated the properties of the saddle-point equilibrium in a two-team zero-sum game;  \cite{lagoudakis2002learning} has proposed a learning algorithm for zero-sum Markov games  between teams of multiple agents, with extensions to general sum 2-team games in \cite{ghimire2023solving}; and \cite{maity2017linear} has provided only the Nash controller policies for 2-team general sum games without explicit sensor policy computation, which becomes challenging even for a 2-team problem.

To alleviate the above issue of equilibrium policy computation, the framework of mean-field games (MFGs) (which forms a subset of the mean-field team games with each team constituting a single DM) was introduced simultaneously in \cite{huang2007large} and \cite{lasry2007mean}. The idea behind the same is to compute approximate decentralized Nash equilibrium policies for agents by passing to an infinite agent system (under suitable assumptions of anonymity between agents), in which case the effect of actions of any specific agent becomes insignificant on that of the other agents, thereby leading to a decoupling effect.
Under this scenario, one can solve for a representative agent's equilibrium policy within the infinite agent system, which is consistent with that of the population and then characterize its approximation with respect to the centralized Nash equilibrium policies for the finite-agent system.

While many works have considered both theoretical and application aspects of standard MFGs \cite{huang2006large,aggarwal2023weighted,bagagiolo2014mean,olmez2022modeling,aggarwal2024mean}, only a few results exist pertaining to MFTGs \cite{huang2024linear,subramanian2023mean}. A similar paradigm also worth mentioning is that of \emph{mean-field type games} \cite{djehiche2016mean,zaman2024independent} where the number of teams is finite, but the number of agents within each team goes to infinity. An added challenge within the MFTG setup is to solve for the team optimal solution for each team whilst handling the incoming interactions from other teams.
While the work \cite{subramanian2023mean} considers a discrete state and action setup, the work \cite{huang2024linear} considers a linear-quadratic setting to compute an asymptotic mixed-equilibrium optima for the team-game problem. In our work, however, we consider a mixed \textit{discrete-continuous} action space where the controller action lies within the $m$--dimensional Euclidean space while the sensor action is binary-valued (either 0 denoting no transmission or 1 denoting full state transmission). Further, as we will see, the cost function is non-quadratic and the estimation error dynamics, which forms the state of the estimated problem, is bilinear due to the presence of no-transmission events, which makes the problem significantly more challenging.

\subsubsection{Semantic communication}
With the proliferation of the Internet-of-Things (IoT) technology, there has been an increasing interest in the \textit{efficient} use of the shared wireless spectrum \cite{aggarwal2023weighted,aggarwal2023large,mason2024multi}. To enable the same, semantic communication is an emerging paradigm that aims to transmit only \textit{relevant} information through shared resources rather than just the raw data, leading to more intelligent and context-aware communication systems, especially in bandwidth-constrained environments \cite{lan2021semantic}.
For instance, in \cite{uysal2022semantic}, it has been argued that with semantic communication, it is possible to reduce the transmission rate without significant loss in the system performance; the paper \cite{du2023rethinking} discusses the potential energy savings resulting from semantic interaction in connected autonomous vehicles; and the paper \cite{kountouris2021semantics} demonstrates the effectiveness of semantics powered sampling for remote real-time source reconstruction. We refer the interested reader to \cite{ayan2022semantics,yang2022semantic}, and the references therein for additional details. In this work, we capture the semantics of information using the value of information (VoI) metric \cite{soleymani2021valu}. This metric essentially measures the (excess) payoff derived by a team from the information communicated by the sensor to the controller as opposed to no communication, thereby providing a quantitative basis for evaluating the effectiveness of semantic communication in our proposed approach.

Thus, we list the main contributions of our work as follows.

\begin{enumerate}
    \item We address the problem of equilibrium computation and analysis in shared-channel multi-team noncooperative dynamic games. The complexity introduced by the presence of a large number of teams renders exact computations intractable. To overcome this, we leverage the framework of mean-field team games to compute approximate decentralized Nash equilibrium (controller-sensor) policies for each team (as established in Proposition \ref{prop:existence} and Theorem \ref{thm:uniqueness}). The novelty of our approach lies in addressing the general non-contractivity of the mean-field operator, caused by the finite cardinality of the sensor action space. This non-contractivity precludes the existence of a mean-field team equilibrium (MFTE). To resolve this challenge, we introduce probabilistic Boltzmann-type policies to approximate the optimal sensor policies which effectively smoothen out the non-contractivity issues by introducing stochasticity into the decision-making process as a function of the average utilization of the wireless channel (or the mean traffic flow across the network). This formulation subsequently allows us to establish the existence and uniqueness of an approximate MFTE.

    \item Our second contribution is to establish the reasonability and practicality of applying the Mean-Field Team Equilibrium (MFTE) solution to finite multi-team games (as formally introduced in Definition \ref{defn:epsNash}). To address this, we theoretically demonstrate (in Theorem \ref{thm:eps_Nash}) that the MFTE serves as an approximate Nash equilibrium for the finite-team system. 
    The approximation arises from two main factors. First, to derive the MFTE solution, we rely on a $\alpha$--parameterized Boltzmann policy, which introduces a level of approximation due to its probabilistic nature. In this case, we show (partially in Proposition \ref{prop:Second_term_bound}) that the degree of approximation decays exponentially as a function of $\alpha$.
    Second, the MFTE solution is based on the infinite-team mean-field framework, which starts with an infinitely large number of teams and uses the mean-field approximation to model their interactions. When this solution is applied to a system with a finite number of teams (denoted by $N$), the inherent discrepancy between the infinite-team model and the finite-team system introduces an additional source of approximation (which we show decays with order $1/\sqrt{N}$ and an additional asymptotic term in $N$, using helper Lemmas \ref{lem:Approx_MFE}, \ref{eq:First_term_bound}, \ref{eq:Third_term_bound} and \ref{lem:Q_bounded}).
\end{enumerate}

We would also like to highlight that our current work employs optimal policy notions introduced in \cite{soleymani2021valu}, which focused on a single-team optimal control problem, and constitutes a special instance of our work where the number of teams, $N$, is equal to 1. However, transitioning from a single-team optimal control problem to a multi-team noncooperative dynamic game introduces significant new challenges.
The key non-triviality in our work lies in solving for Nash equilibrium policies in addition to the team-optimal policies for each team. Within this setup, we must account for strategic interactions between multiple teams operating in a shared-channel environment, which can especially be difficult within a large population scenario. By leveraging the MFTG framework, we overcome these challenges and provide a structured approach to compute approximate Nash policies (primarily through Theorems \ref{thm:uniqueness} and \ref{thm:eps_Nash}).

\vspace{-0.2cm}
\subsection{Organization of the paper}
The paper is organized as follows. We formulate the finite-team game problem in Section \ref{sec:formulation} and the mean-field team game in Section \ref{sec:MF_team_Game}. The analysis of the existence and uniqueness of the MFTE is presented in Section \ref{sec:eqb_analysis}, its approximation performance is studied in Section \ref{sec:perf_approx}, and the approximate Nash equilibrium analysis is included in Section \ref{sec:eps_nash}. Numerical results are presented in Section \ref{sec:num_exp}, and the paper concludes with some major highlights and ensuing discussions in Section \ref{sec:conclusion}. Proofs of main propositions and the supporting lemmas are presented in the Appendices.

\textbf{Notations:} The set of agents is denoted by $[N]:= \{1, \cdots, N\}$ and the set of time instants by $[T]:= \{0, \cdots, T-1\}$. We denote the Euclidean norm by $\|\cdot\|$. For a positive definite (resp. semi-definite) matrix $Z$, we use the notation $Z \succ 0$ (resp. $Z \succeq 0$). The trace of a matrix $Z$ is denoted by $tr(Z)$, and for a vector $x$, we define $\|x\|^2_Z:= x^\top Z x$. For two real functions $g$ and $h$,  $g(x) = \mathcal{O}(h(x))$ means that there exists a constant $C>0$ such that $\lim_{x \rightarrow \infty} |g(x)/h(x)| = C$. We define $Z_{0:m} := \{Z_0, \cdots, Z_m\}$, and $\mathbb I[A]$ as the indicator function of a measurable set $A$. 
\vspace{-0.2cm}
\section{Finite-Team Game Problem}\label{sec:formulation}\vspace{-0.1cm}
Consider the networked multi-agent system shown in Fig. \ref{Fig:Inf_flow}, comprised of $N$ teams. Each team $i$ is modeled by a triple $(P_i,S_i,C_i)$, i.e., a dynamically evolving plant ($P_i$), a sensor ($S_i$), and a controller ($C_i$), where $S_i$ and $C_i$ can be viewed as the two active decision makers of team $i$. The plant $P_i$ evolves according to a stochastic linear difference equation as
\begin{align}\label{System_dynamics}
    X^i_{k+1} = A(\omega_\ell)X^i_k + B(\omega_\ell)U^i_k + W^i_k, ~~i \in [N],
\end{align}
where $X^i_k \in \mathbb{R}^n$ denotes the state, and $U^i_k \in \mathbb{R}^m$ denotes the control input, both for the $i^{th}$ team. The system noise $W^i_k$ is assumed to be Gaussian
with zero mean and finite covariance $K_{W^i}$. Further, $A(\omega_\ell)$ and $B(\omega_\ell)$ are matrices with appropriate dimensions which depend on the type of the team $\omega_\ell$ with $\omega_\ell \in \Omega$, and are chosen according to empirical probability mass function (pmf) $\mathbb{P}_N(\omega =\omega_\ell)$, for all teams. We take the set $\Omega$ of possible types to have finite cardinality.
 The initial state $X_0^i$ of a team of type $\omega_\ell$ has a symmetric density with mean $\nu(\omega_\ell)$ and positive definite covariance $\Sigma_0(\omega_\ell)$ for each team. We assume that $X_0^i$ is independent of noise $W^i_k$ for all $k,i$, and $\lim_{N \rightarrow \infty} \mathbb P_N(\omega) = \mathbb P(\omega), \forall \omega \in \Omega$.
 
 \begin{figure}[t]
	\centerline{\includegraphics[width=0.6\columnwidth]{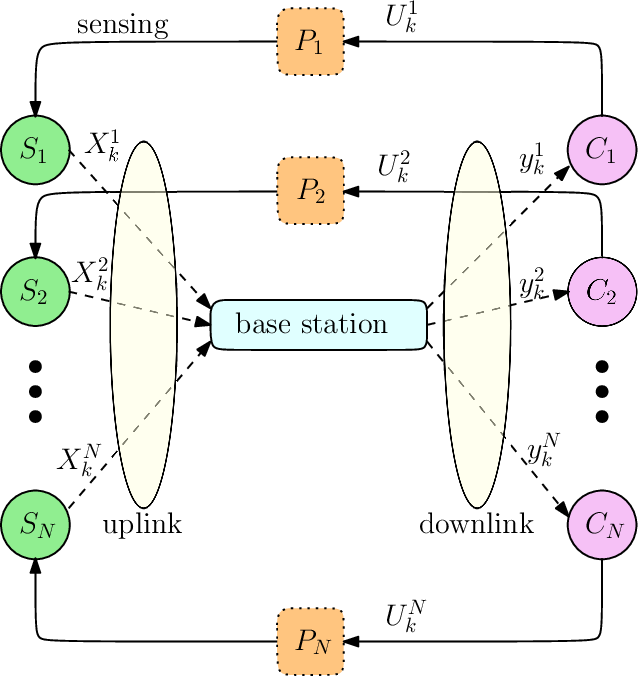}}
	\caption{\small{The figure shows a prototypical networked multi-agent system constituting $N$ teams of plant $P_i$, sensor $S_i$ and controller $C_i$. The \textit{shared} wireless link between the sensor and the respective controller constitutes two wireless hops, namely, the uplink and the downlink. The shared channel couples the sensing policies of each team leading to a noncooperative game setup between teams.}}
	\label{Fig:Inf_flow}
    \vspace{-0.4cm} 
\end{figure}
Next, for each team $i$, full-state information ($X^i_k$) of the plant is relayed by $S_i$ to $C_i$ over a shared two-hop wireless channel via a central base station  which coordinates the flow of information (as illustrated in Fig. \ref{Fig:Inf_flow}). We refer to the hop from the sensor to the BS as the uplink and the one from the BS to the controller as the downlink. The sensory information is relayed to the controller (via the uplink and the downlink) subject to a one-step delay, i.e., if $X^i_{k-1}$ and $y^i_k$ denote the uplink input and downlink output, respectively, then we have that
\begin{align}\label{decoder_signal}
    y^i_{k+1} = \left\{ \begin{array}{ll}
X^i_k, & \text{if $\gamma^i_k = 1$} \\ 
\varphi, & \text{if} ~\gamma^{i}_{k} = 0 \\
\end{array},
\right.
\end{align}
for $k \in [T]$ and $y_0 = \varphi$. The quantity $\varphi$ denotes an empty symbol and $\gamma^i_k$ denotes the transmission requests of $S_i$ as \begin{align}\label{transmission}
   \gamma^{i}_k = \left\{ \begin{array}{ll}
1, & \text{if transmission is requested} \\ 
0, & \text{otherwise} \\
\end{array}.
\right.
\end{align}

We further assume that the downlink has enough bandwidth to accommodate all the sensor requests.\footnote{The case where the downlink suffers from a capacity constraint with an ideal uplink is dealt with in the works \cite{aggarwal2023weighted,aggarwal2023large}, where the authors use a weighted age of information-based metric to devise optimal downlink transmission strategies for the base station. However, the case where both the uplink and the downlink suffer from non-idealities is a significantly challenging problem since one would need to jointly design each team's policies and the scheduling policy of the base station. We will leave this as an open and promising research direction.} 
Whenever a transmission request is initiated, a connection is established between the sensor and the controller via the base station. However, each packet transmitted from the sensor to the controller incurs a running cost for team $i$ which is proportional to the \textit{average number of users communicating on the uplink}. Specifically, for given sensor actions $\gamma^j_k$'s of the sensors $S_j$'s of all teams, this cost for team $i$ is given as:
\begin{align}\label{Scheduling_Cost}
    c_1^i(\gamma^1_k, \cdots, \gamma^N_k) := \gamma^i_k \Bigg( \frac{1}{N}\sum_{j \in N} \gamma^{j}_k \Bigg),
\end{align}
where the term $\frac{1}{N}\sum_{j \in [N]} \gamma^{j}_k$ denotes \textit{average channel utilization} due to all the teams and serves to model the mean traffic flowing through the wireless channel at instant $k$. Consequently, team $i$ incurs a high communication cost if $S_i$ transmits to $C_i$ (i.e., $\gamma^i_k = 1$) when the channel is experiencing a high load. Hence, the product of $\gamma^i_k$ and the average utilization term effectively discourages excessive use of the shared spectrum and this formulation can thus be interpreted as a soft cost penalty modeling of a bandwidth-constrained wireless channel.

Upon reception of the transmitted data packet, the controller $C_i$ utilizes it to determine an appropriate control action, which also incurs an actuation cost given as:
\begin{align}\label{LQT_cost}
    c_2^i(X_k^i,U_k^i) := \|X^i_k\|^2_{Q(\omega_\ell)} \!+\! \|U^i_k\|^2_{R(\omega_\ell)},
\end{align}
where $\omega_\ell$ denotes the type of team $i$, $Q(\omega_\ell) \succeq 0$, $R(\omega_\ell) \succ 0$ are type-dependent matrices, and the cost given in \eqref{LQT_cost} incorporates a soft constraint on the control input alongside penalizing state deviations from the origin.
Let us introduce the information set of the $i^{th}$ team's sensor and controller as
\begin{subequations}\label{eq:info_cent}
    \begin{align}
    \calI^{S,i}_k & := \{X^i_{0:k},y^i_{0:k}, U^i_{0:k-1},\gamma^{\ell}_{0:k-1} \mid \ell \in [N]\} \\
    \calI^{C,i}_k & :=\left\lbrace y^i_{0:k},U^i_{0:k-1} \right\rbrace,
\end{align}
\end{subequations}
for $k \in \{1, 2, \cdots, T-1\}$ with $\calI^{S,i}_0 := \{X^i_0,y^i_0\}$ and $\mathcal{I}^{C,i}_0 := \{y^i_0\}$. Further, let us define the maps $\xi^i_k:\calI^{S,i}_k \to \{0,1\}$ and $\pi^i_k:\calI^{S,i}_k \to \mathbb R^{m}$ with $\xi^i = \{\xi^i_k\}_{\forall k}$ and $\pi^i = \{\pi^i_k\}_{\forall k}$ denoting the $i^{th}$ team's sensor policy and controller policy, respectively, which lie in the space of admissible \textit{centralized}\footnote{Here we use the term \textit{centralized} to emphasize that the information set of sensor $i$ contains the transmission instants of other agents as well.} sensor policies (denoted as $\Xi^{cen,i}$) and controller policies (denoted as $\mathcal{M}^i$), respectively, given by
\begin{align*}
    \Xi^{cen,i} & := \{\xi^i \mid \xi^i_k ~\text{is adapted to } \sigma(\calI^{S,i}_t, ~t = 0,1, \cdots, k)\} \\
    \mathcal{M}^i & := \{\pi^i \mid \pi^i_k ~\text{is adapted to } \sigma(\calI^{C,i}_s, ~s = 0,1, \cdots, k)\},
\end{align*}
where $\sigma(\cdot)$ is the $\sigma$--algebra generated by its arguments. We will refer to $\mu^i:=(\pi^i,\xi^i)$ as the $i^{th}$ team's policy. 
Consequently, the overall average cost incurred by team $i$ over a finite horizon of $T > 0$ is given as:
\begin{align}
    J^N_i(\mu^i, \mu^{-i}) := \frac{1}{T} \mathbb E \left[\sum_{k=0}^{T-1}  c_2^i(X_k^i,U_k^i) + \lambda c_1^i(\gamma^1_k, \cdots, \gamma^N_k) \right],
\end{align}
where we let $\mu^{-i}:= (\mu^1,\cdots, \mu^{i-1}, \mu^{i+1}, \cdots, \mu^N)$ to be the vector of policies of teams other than the $i^{th}$ one. Further, $\lambda >0$ is the weighting parameter which trades off the costs between uplink communication and control performance.
Finally, the expectation above is taken with respect to the stochasticity induced by the (possibly) randomized policies, the system noise, and the initial state distribution. As a result, each team is faced with the following stochastic team-game problem.

\begin{problem}[Team-level game problem]\label{Prob:Stoch_min_Overall}
Each team $i$ wishes to solve the following optimization problem:
\begin{align*}
    & \inf_{\mu^i\in \mathcal{M}^i \times \Xi^{cen,i}} \! J^N_i(\mu^i,\mu^{-i}) \\
    & \mbox{s.t.} ~\eqref{System_dynamics},~\eqref{decoder_signal}\text{, and }  \eqref{eq:info_cent} \text{ hold}.
\end{align*}
\end{problem}
In the above problem, we first observe that for each team, a higher communication rate between the sensor and the controller enhances control performance due to more frequent state updates; however, this comes at the expense of increased communication costs. Conversely, a lower communication rate reduces communication overhead but degrades control performance. Hence, the sensor and the controller must \textit{judiciously} select their respective policies to balance these competing objectives and achieve optimal system performance. Moreover, each team's decision-making process is influenced by the \textit{strategies} of the other teams through their respective sensor policies (and the shared nature of the wireless medium), leading to a noncooperative game-theoretic setup.
Notably, prior works \cite{soleymani2021valu,soleymani2022value} addressed a special case of our framework, where the problem is limited to a single team ($N=1$). In contrast, our objective in this work is to compute Nash equilibrium policies for the $N$ teams. That is, we are looking for a $2N$--tuple $(\mu^{*,i} \in {\cal M}^i \times \Xi^{cen,i}, i \in [N])$ such that
\begin{align*}
J^N_i(\mu^{*,i}\mu^{*,-i}) = \inf_{\mu^i \in {\cal M}^i \times \Xi^{cen,i}} J^N_i(\mu^{i}, \mu^{*,-i})
\end{align*}
for all $i \in [N]$. The above set of equations state that if teams other than the $i^{th}$ one follow the Nash equilibrium policy, then it is also in the best interest of team $i$ to follow the equilibrium policy $\mu^{*,i}$. However, the primary challenge in computing these equilibrium policies arises from the presence of a large number of teams in typical networked systems. In such large-scale environments, it is neither feasible nor practical for each team to obtain and store the complete strategy information of all other teams. Given this limitation, our goal in the rest of this paper is to derive \textit{(approximate decentralized) team (Nash) equilibrium policies} that rely solely on the local information accessible to each team, and the statistics of the underlying game. To achieve this, in the next three sections, we will leverage the mean-field game framework \cite{huang2006large,lasry2006jeux,lasry2006jeux1}, which provides a principled approach for computing the same. In passing, we also note here that a key real-world application of the above formulation is OptiTrack motion capture systems \cite{li2021real}, which serve as prototypes for both indoor navigation systems and outdoor navigation in GPS-denied/noisy-GPS environments. We will simulate such a system later in the numerical section to validate our proposed approach, which is presented next.

\section{Networked Mean-Field Team Game}\label{sec:MF_team_Game}

In this section, we formulate the mean-field team game (MFTG) with infinite teams corresponding to the $N$--team game of the previous section. Under this asymptotic setting, the average channel utilization ratio in \eqref{Scheduling_Cost} (and consequently in Problem \ref{Prob:Stoch_min_Overall}) can be approximated by deterministic \textit{mean-field trajectory} by using the Nash certainty equivalence principle \cite{huang2007large}. The rationale behind the same is that the aggregation of sensor policies across all teams, as captured in \eqref{Scheduling_Cost}, leads to a deterministic effect in the asymptotic limit as the number of teams approaches infinity. This phenomenon induces a \textit{decoupling effect}, whereby the originally coupled multi-team optimization problem (Problem \ref{Prob:Stoch_min_Overall}) simplifies to a team-optimal control problem for a single representative team, the solution of which is consistent with the solutions of other teams in the population. However, it is important to emphasize that this reasoning is, at this stage, a heuristic argument. A rigorous justification requires a precise mathematical characterization of the interaction between individual teams and the collective population. Establishing this formal connection and proving the validity of the mean-field approximation will be the central focus of the analysis that follows.

To proceed with the same, we first provide the team-optimal policy for a representative team under the mean-field approximation. However, a key challenge arises due to the finite cardinality of the sensing action set (given as $\{0,1\}$), which leads to the \textit{non-contractivity} of the mean-field operator (as previously also noted in \cite{cui2021approximately}), complicating the direct application of standard fixed-point techniques. To address this issue, we introduce a Boltzmann-type sensor policy that smooths the sensor policy by incorporating stochasticity. Using this formulation, we subsequently establish the existence of a unique \textit{approximate mean-field team equilibrium} (MFTE). Finally, we rigorously show that the derived MFTE serves as an \textit{approximate Nash equilibrium} for the finite $N$--team game, with ``approximation" defined in a precise way. A schematic flow of the solution approach is also later presented in Fig. \ref{Fig:flowchart}.

\subsection{Mean-Field Team Game}\label{subsec:MFTG}
Consider a representative team (within the infinite team system) of type $\omega$, characterized by the tuple $(P(\omega),S(\omega), C(\omega))$ of a plant, a sensor, and a controller. The plant dynamics evolve according to the stochastic linear difference equation
\begin{align}\label{systemMF}
    X_{k+1} = A(\omega) X_k + B(\omega)U_k + W_k,
\end{align}
where $(X_k,U_k) \in \mathbb{R}^n \times \mathbb{R}^m$ denotes the state-control input pair of the representative team. Noise $W_k$ is assumed to be Gaussian with zero mean and positive definite covariance $K_W(\omega)$. The sensor $S(\omega)$ generates transmission requests ($\gamma_k \in \{0,1\}$) over the uplink, which is then coordinated by the base station to update the state of the plant to the controller. As a result of network usage, the team incurs a running cost given as
\begin{align*}
    \\[-45pt]
\end{align*}
\begin{align}\label{Scheduling_CostMF}
    c_1^\omega(\gamma_k, g_k) := \gamma_k g_k,
\end{align}
where $\gamma_k$ is a sensor action chosen using the mapping $\zeta_k:\calI^{S,\omega}_k \rightarrow \{0, 1\} $ with $\zeta := \{\zeta_k\}_{\forall k} \in \Xi^{dec}:= \{\zeta |\zeta_k ~\text{is adapted to }~ \sigma(\calI^{S,\omega}_t,~ t = 0,1, \cdots, k)\}$. In the above, the set $\Xi^{dec}$ denotes the space of admissible \textit{decentralized} sensor policies and $\calI^{S,\omega}_k = \{X_{0:k},y_{0:k-1}, U_{0:k-1},\gamma_{0:k-1}\}$ defines the information structure of the representative team's sensor. We note here that the above set $\calI^{S,\omega}$ constitutes only the \textit{local signal information of the representative team} as opposed to the information set defined in \eqref{eq:info_cent}. Moreover, the deterministic term ${g} := ({g}_0, {g}_1, \cdots, {g}_{T-1})$ represents the mean-field trajectory and can be viewed as an infinite-team approximation to the channel utilization ratio term in \eqref{Scheduling_Cost}. While this assertion is currently heuristic, we will rigorously establish this argument later (in Lemma \ref{lem:Approx_MFE}), demonstrating that under specific assumptions in the large-population regime, the mean-field trajectory provides an approximation to the empirical channel utilization term (as in \eqref{Scheduling_Cost}).
Finally, the controller $C(\omega)$ applies a control policy which incurs a running cost of
\begin{align}\label{control_cost}
    c_2^{\omega}(X_k, U_k) := \|X_k\|^2_{Q(\omega)} + \|U
    _k\|^2_{R(\omega)},
\end{align}
where the control inputs $U_k$ at instants $k$ are chosen from the policy $\pi := \{\pi_k\}_{\forall k}$, which lies in the space of admissible control policies defined similarly to $\!\mathcal{M}^i\!$ as before.
Thus, the representative team's optimization problem is defined as follows:

\begin{problem}[Representative team problem]\label{Prob:Overall_MF}
    \begin{align}\label{Team_CostMF}
    & \hspace{1cm} \inf_{(\pi, \zeta)\in\mathcal{M} \times \Xi^{dec}} J^\omega(\pi,\zeta,g) := \frac{1}{T} \mathbb E \left[\sum_{k=0}^{T-1}  c_2^\omega(X_k,U_k) + \lambda c_1^\omega(\gamma_k, g_k) \right] \nonumber \\
    & \mbox{s.t.} ~\eqref{decoder_signal} \text{ and } \eqref{systemMF} \text{ hold},
\end{align}
where $J^\omega(\pi,\zeta,g)$ denotes the joint time-average overall cost incurred by the representative team with $ c_1^{\omega}(\cdot,\cdot)$ and $c_2^\omega(\cdot, \cdot)$ defined, respectively, by (\ref{Scheduling_CostMF}) and (\ref{control_cost}). 
\end{problem}
The minimization problem above can be viewed as a team-optimal control problem of the representative team of type $\omega$ for a given mean-field trajectory $g$. In order to construct its solution, we invoke the following assumption on the mean-field trajectory, which is natural for (and does not bring in any loss of generality to) the problem at hand.
\begin{assumption}\label{As_ctrb}
    The MF trajectory ${g} \!\in\! \ell_{\infty}^T$, where we define $\ell_{\infty}^T\!:= \!\{{g}_k \!\in \!\mathbb{R} \mid \sup_{ k \in [T]} $ $\!| {g}_k|\! \leq 1\}$ as the set of bounded $T \!$--length sequences.    
    
\end{assumption}

Next, in order to define the mean-field team equilibrium (MFTE), we introduce the following two operators \cite{saldi2018markov}:
\begin{enumerate}
    \item $\Phi:\! \ell_{\infty}^T \!\rightarrow\! 2^{\mathcal{M} \times \Xi^{dec}}$, which, for any given mean-field trajectory $g$, outputs one or more team optimal policies (given by $ \argmin_{(\pi, \zeta)\in\mathcal{M} \times \Xi} J^\omega(\pi,\zeta,{g})$), and
    \item $\Psi: \mathcal{M} \times \Xi^{dec} \rightarrow \ell_\infty^T$, which maps $(\pi,\zeta) \mapsto {g}$ and generates a MF trajectory $g$ from a given policy ($\pi,\zeta$).
\end{enumerate}
    Consequently, we can define the MFTE as follows.

    \begin{definition}[Mean-Field Team Equilibrium]
        A mean-field team equilibrium (MFTE) is a policy-trajectory pair $((\pi^*,\zeta^*), {g}^*)$ such that $(\pi^*, \zeta^*) \in \Phi(g^*)$ and $g^* = \Psi((\pi^*,\zeta^*))$.
    \end{definition}

In the sequel, we first compute the team optimal policy pair ($\pi^*,\zeta^*$) solving Problem \ref{Prob:Overall_MF} for a given MF trajectory $g$ and then provide sufficient conditions for the existence of a unique approximate MFTE.
\vspace{-0.2cm}
\section{Mean-Field Team Equilibrium Analysis}\label{sec:eqb_analysis}
\vspace{-0.15cm}
We start this section by solving Problem \ref{Prob:Overall_MF}. Using completion of squares, we can re-write the cost function in \eqref{Team_CostMF} as
\begin{align}\label{eq:comp_square}
    J^\omega(\pi,\zeta,&g)  =  \mathbb E [\|X_0(\omega)\|^2_{P_0(\omega)}] + \frac{1}{T} \sum_{k=0}^{T-1} tr(P_k(\omega)K_W(\omega)) \nonumber \\
    & \hspace{-1cm} + \frac{1}{T} \mathbb E \Big[ \sum_{k=0}^{T-1}\|U_k + \hat R_k^{-1}(\omega) B^\top(\omega) P_{k+1}(\omega)A(\omega)X_k\|^2_{\hat R_k(\omega)} \nonumber \\
    & + \lambda \gamma_k {g}_k  \Big],
\end{align}
where $\hat R_k(\omega):= R(\omega)+B^\top(\omega) P_{k+1}(\omega)B(\omega)$ and $P_k \succeq 0$ obeys the Riccati difference equation (RDE)
\begin{align}\label{eq:ARE_P}
    P_k(\omega) & = Q_k(\omega) + A^\top(\omega)P_{k+1}(\omega)A(\omega) \nonumber \\
    &  - A^\top(\omega) P_{k+1} B(\omega) \hat R_k^{-1}(\omega) B^\top(\omega) P_{k+1}(\omega) A(\omega) \nonumber \\
    P_T(\omega)  & = 0.
\end{align}
 Then, we have the following result on the optimal controller policy ($\pi^*$).

\begin{figure*}[t]
	\centerline{\includegraphics[width=0.7\textwidth]{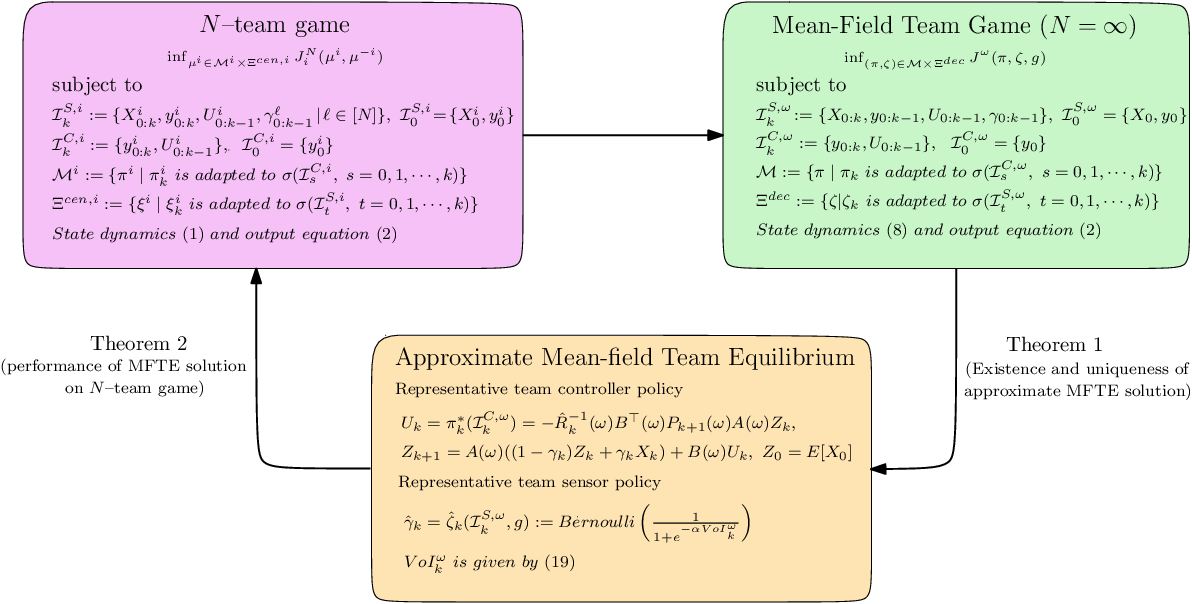}}
	\caption{\small{The flowchart above depicts our proposed mean-field team game approach. Precisely, we start with an $N$--team game, and then propose a MFT game. Subsequently, we present Theorem \ref{thm:uniqueness} to prove the existence of a unique approximate MFTE solution of the MFT game. Finally, we show that this solution constitutes an approximate Nash equilibrium solution for the $N$--team game using Theorem \ref{thm:eps_Nash}.}}
	\label{Fig:flowchart}
    \vspace{-0.4cm} 
\end{figure*}

\begin{proposition}\label{prop:MMSE}
Suppose $P_k (\omega) \succeq 0$ satisfies the RDE \eqref{eq:ARE_P} for all $k \in [T]$ for a representative team of type $\omega$, for all $\omega \in \Omega$. Then, the optimal controller is unique and is obtained as
\begin{align}\label{eq:opt_controller}
    U_k = \pi^*_k(\mathcal{I}^{C,\omega}_k) = -\hat{R}^{-1}_k(\omega)B^\top(\omega) P_{k+1} (\omega) A(\omega) Z_k,
\end{align}
$\forall \omega \in \Omega, \forall k \in [T]$, with $Z_k \!:= \!\mathbb E[X_k\! \mid\! \mathcal{I}^{C,\omega}_k]$ being the conditional mean of the state given the information available to the controller. Furthermore, $Z_k$ follows the dynamics
    \begin{align}
        Z_{k+1} & = A(\omega)((1-\gamma_k)Z_k+\gamma_k X_k)  + B(\omega)U_k, 
    \end{align}
$\forall \omega \in \Omega, \forall k \in [T]$ with $Z_0 = \EE[X_0]$. \hfill \qed
\end{proposition}

The proof of the above proposition readily follows from \eqref{eq:comp_square} and by taking expectation in \eqref{systemMF} conditioned on the controller information structure and is thus not included here.

Having determined the optimal controller policy, we now move on to computing the optimal sensor policy for each representative team of type $\omega$. To facilitate this, let us define the estimation error $e_k:= X_k - Z_k$. Then, by substituting the optimal controller \eqref{eq:opt_controller} in \eqref{eq:comp_square}, we arrive at
\begin{align}\label{eq:sensing_cost}
    J^\omega(\zeta,g) & =  J^* + \frac{1}{T} \mathbb E \Big[ \sum_{k=0}^{T-1} \|e_k\|^2_{\Gamma_k} + \lambda {g}_k \gamma_k  \Big],
\end{align}
where $J^* \! \!:=\! (\nicefrac{1}{T})\!\sum_{k=0}^{T-1} \!tr(P_k(\omega)K_W(\omega)) + \mathbb E [\|X_0(\omega)\|^2_{P_0(\omega)}] $,

$\Gamma_k(\omega):= A^\top(\omega) P_{k+1}^\top (\omega) B(\omega) \hat R^{-1}_k(\omega) B^\top (\omega) P_{k+1}(\omega) A(\omega) $, and $e_k$ satisfies the following stochastic difference equation:
\begin{align}\label{eq:est_error}
  e_{k+1} = (1-\gamma_k)A(\omega)e_k + W_k. 
\end{align}
Thus, the sensor objective is to solve for the optimal policy minimizing \eqref{eq:sensing_cost} subject to \eqref{eq:est_error}. To obtain it, let us first define the optimal cost-to-go ($V_k^\omega(\mathcal{I}^{S,\omega}_k,g)$) of the sensor as follows:
\begin{align*}
V_k^\omega(\mathcal{I}^{S,\omega}_k,g):= \min_{\zeta \in \Xi^{dec} \mid \pi = \pi^*} \frac{1}{T}\mathbb E \Big[ \sum_{t=k}^{T-1} \|e_t\|_{\Gamma_t}^2+ \lambda g_t \gamma_t \mid \calI^{S,\omega}_k \Big],
\end{align*}
for all $k\in [T]$ and with the convention that $g_{-1} = 0$. Then, we invoke the following definition \cite{soleymani2021valu} of the value of information (VoI), which quantifies the trade-off between the benefit gained from transmitting a data packet over the channel and the associated cost of transmission.

\begin{definition}
    The VoI at instant $k$ for the sensor of team of type $\omega$ is defined as a function of $\mathcal{I}^{S,\omega}_k$ as
    \begin{align}
        {\operatorname{VoI}}_k^{\omega} := V_k^{\omega}(\mathcal{I}^{S,\omega}_k,g)|_{\gamma_k = 0} - V_k^{\omega}(\mathcal{I}^{S,\omega}_k,g)|_{\gamma_k = 1},
    \end{align}
    where $V^{\omega}(\cdot,\cdot)|_{\gamma}$ denotes the sensor's optimal cost-to-go under the sensor action  $\gamma$.
\end{definition}

Subsequently, we present the following proposition which characterizes the optimal sensor policy ($\zeta^*$).

\begin{proposition}\label{Prop:Optimal_policy}
   Consider a representative team of type $\omega \in \Omega$. Then, its optimal sensing actions are chosen according to the optimal sensor policy $\zeta^*$, which has a threshold structure, and given as
    \begin{align}
        \gamma_k = \zeta^*_k(\mathcal{I}^{S,\omega}_k,g)  = \mathbb I [\operatorname{VoI}_k^\omega \ge 0],~~ \forall k \in [T],
    \end{align}
    where we have
    \begin{small}
    \begin{align}\label{eq:cond_VF}
        \operatorname{VoI}_k^\omega & = e_k^\top A(\omega)^\top \Gamma_k(\omega)A(\omega)e_k - \lambda {g}_k + \varepsilon_k(\omega), \\
        \varepsilon_k^{\omega} & := \mathbb{E}[V^\omega_{k+1}(\mathcal{I}^{S,\omega}_{k+1},g) \!\mid \! \mathcal{I}^{S,\omega}_k,\gamma_k = 0] - \mathbb{E}[V^\omega_{k+1}(\mathcal{I}^{S,\omega}_{k+1},g) \! \mid \! \mathcal{I}^{S,\omega}_k,\gamma_k = 1] \nonumber,
    \end{align}
    \end{small}
    and $\Gamma_k(\omega)$ is as defined in \eqref{eq:sensing_cost}.
\end{proposition}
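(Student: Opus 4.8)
The plan is to fix the controller at its optimum $\pi^*$ and solve the resulting sensor problem by backward dynamic programming. Once $\pi=\pi^*$ is substituted, the sensor faces the reduced objective \eqref{eq:sensing_cost}, whose only decision-dependent state is the estimation error $e_k$; note that $e_k=X_k-Z_k$ is $\calI_k^{S,\omega}$--measurable, since the sensor can reconstruct $Z_k$ from $U_{0:k-1}$ together with the transmission history. First I would argue that $(e_k,k)$ is a \emph{sufficient statistic}: by \eqref{eq:est_error}, $e_{k+1}=(1-\gamma_k)A(\omega)e_k+W_k$ is Markov given $\gamma_k$, the stage cost $\tfrac1T(\|e_k\|_{\Gamma_k}^2+\lambda g_k\gamma_k)$ depends only on $(e_k,\gamma_k,k)$, and $W_k$ is i.i.d.\ and independent of the past. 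This recasts the problem as a finite-horizon MDP with the two-point control set $\{0,1\}$, so $V_k^\omega$ is well defined by backward induction from $V_T^\omega=0$ and an optimal Markov sensor policy exists.

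Next I would invoke the Bellman recursion, which follows from the tower property applied to the definition of $V_k^\omega$:
\[
V_k^\omega(\calI_k^{S,\omega},g) = \min_{\gamma_k\in\{0,1\}} \Big\{ \tfrac{1}{T}\big(\|e_k\|_{\Gamma_k}^2 + \lambda g_k\gamma_k\big) + \mathbb E\big[V_{k+1}^\omega(\calI_{k+1}^{S,\omega},g)\mid \calI_k^{S,\omega},\gamma_k\big] \Big\}.
\]
Because the action set has only two points, the minimization is a direct comparison of the branch values $V_k^\omega|_{\gamma_k=0}$ and $V_k^\omega|_{\gamma_k=1}$. Transmitting is optimal exactly when $V_k^\omega|_{\gamma_k=1}\le V_k^\omega|_{\gamma_k=0}$, i.e.\ when the gap $\operatorname{VoI}_k^\omega:=V_k^\omega|_{\gamma_k=0}-V_k^\omega|_{\gamma_k=1}$ is nonnegative; breaking ties toward transmission gives $\gamma_k=\mathbb I[\operatorname{VoI}_k^\omega\ge 0]$, which is the first claim, and the policy depends on $\calI_k^{S,\omega}$ only through $e_k$ by the sufficiency argument.

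It then remains to expand $\operatorname{VoI}_k^\omega$. Taking the difference of the two branches, the $\gamma_k$--independent term $\tfrac1T\|e_k\|_{\Gamma_k}^2$ cancels, the communication charge survives only in the transmit branch and yields the term proportional to $-g_k$, and the rest is the difference of continuation values. To evaluate the latter I substitute the two realizations of \eqref{eq:est_error}, namely $e_{k+1}=A(\omega)e_k+W_k$ under $\gamma_k=0$ versus $e_{k+1}=W_k$ under $\gamma_k=1$, into the continuation value. Using $\mathbb E[W_k]=0$ and the independence of $W_k$ from $\calI_k^{S,\omega}$, the noise-only contributions coincide across the two branches and cancel, the deterministic shift $A(\omega)e_k$ produces the quadratic term $e_k^\top A(\omega)^\top\Gamma_k(\omega)A(\omega)e_k$, and the genuinely forward-looking remainder is collected into $\varepsilon_k^\omega$, giving \eqref{eq:cond_VF}; the precise constants (the $1/T$ and $\lambda$ factors) follow by bookkeeping under the chosen normalization of the gap.

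The step I expect to be the main obstacle is the continuation-value difference. Unlike a standard LQ problem, $V_{k+1}^\omega$ is \emph{not} quadratic in $e_{k+1}$: it is the value of a problem with a binary decision at every future stage, hence a pointwise minimum of value branches that is piecewise and non-smooth in the error. Consequently the continuation difference admits no clean closed form and must be retained implicitly through $\varepsilon_k^\omega$; cleanly isolating the deterministic instantaneous benefit (the quadratic term) from this non-quadratic continuation, and verifying that the stochastic parts genuinely cancel in expectation, is the delicate part of the argument. This same non-quadratic, finite-action structure is precisely what later obstructs contractivity of the mean-field operator and motivates the Boltzmann smoothing of the sensor policy.
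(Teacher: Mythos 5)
Your proof is correct in substance, but it takes a genuinely different route from the paper's. The paper does no dynamic programming at this point: its proof of Proposition \ref{Prop:Optimal_policy} consists of observing that, once the mean-field trajectory $g$ is frozen, Problem \ref{Prob:Overall_MF} is exactly a single-team stochastic optimal control problem with purely local information, and then invoking Theorem 1 of \cite{soleymani2021valu} wholesale; the threshold structure and the expansion \eqref{eq:cond_VF} are imported from that reference. You instead reconstruct the argument from scratch: reduction to the error-state MDP via \eqref{eq:sensing_cost}--\eqref{eq:est_error}, backward induction, two-branch comparison, and explicit extraction of the one-step quadratic benefit. What your route buys is self-containedness and transparency about where each term of $\operatorname{VoI}_k^\omega$ comes from; in particular, it surfaces a bookkeeping point the paper leaves implicit: under the main-text definition of $V_k^\omega$ (stage cost $\|e_k\|^2_{\Gamma_k}+\lambda g_k\gamma_k$), the quadratic term $e_k^\top A(\omega)^\top\Gamma_k A(\omega)e_k$ lives \emph{inside} the continuation difference and must be peeled off it, so the residual you call $\varepsilon_k^\omega$ is the continuation difference minus that one-step term, whereas the paper's literal definition of $\varepsilon_k^\omega$ as the \emph{full} continuation difference is consistent only with the shifted stage-cost convention (charging $e_{k+1}^\top\Gamma_k e_{k+1}$ at stage $k$) that the paper actually adopts in Appendix I. What the citation route buys, conversely, is coverage of a subtlety your sketch glosses over: fixing the controller at $\pi^*$ and then optimizing the sensor yields, in general, only a person-by-person optimal pair in a team problem with non-classical information; joint team optimality of this order of optimization rests on the MMSE estimator of Proposition \ref{prop:MMSE} being unaffected by no-transmission events (no signaling), which is precisely what \cite{soleymani2021valu} establishes and what the paper inherits through the citation. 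Within the scope of the proposition your argument stands, because the reduction to \eqref{eq:sensing_cost} is already established in the text before the proposition; a fully self-contained proof, however, would need that decoupling step as well.
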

\begin{proof}
    We start by noting that for a fixed mean-field trajectory $g$, minimization of the objective function \eqref{eq:sensing_cost} under \eqref{eq:est_error} represents a stochastic optimal control problem of the representative team of type $\omega$, where its decision policy ($\zeta$) depends only on the local information and the given trajectory $g$. Hence, for each representative team of type $\omega \in \Omega$, the proof follows from \cite[Theorem 1]{soleymani2021valu}, and is thus complete.
\end{proof}
As a remark, we emphasize that \textit{for a given $g$}, the optimization Problem \ref{Prob:Overall_MF} is a team-optimal control problem for each representative team of type $\omega \in \Omega$. This decoupling is enabled by the mean-field approach, allowing us to solve for each representative team's stochastic optimal control problem independently for a fixed $g$ (in Proposition \ref{prop:MMSE} and \ref{Prop:Optimal_policy}). Our next step is to verify the consistency of this solution. Specifically, if each team adopts its optimal policy $(\pi^*, \zeta^*)$, the collective behavior should regenerate the mean-field trajectory $g$ that was initially assumed. If this condition holds, then, we would have obtained \textit{the MF team equilibrium policy and the corresponding equilibrium trajectory}.

In this regard, let us first make a crucial observation. The optimal sensor policy $\zeta^*$ computed in Proposition \ref{Prop:Optimal_policy} is discontinuous at the point where $\operatorname{VoI}_k^\omega =0$. This makes it prohibitive to employ contraction-type arguments to establish the existence of a unique
MFTE \cite{huang2007large,cui2021approximately}. Thus, our objective in what follows is to construct an approximation of the policy $\!\zeta^*\!$, by restricting it to a special class of policies, then leading to a unique (approximate) MFTE within that class.

Let $\alpha>0$, and define a new policy $\hat{\zeta}:=\zeta^\alpha$ from which an action $\hat{\gamma}_k$ is chosen as follows:
\begin{align}\label{approx_scheduling_policy}
    \hat{\gamma}_k & = \hat{\zeta}_k(\mathcal{I}^{S,\omega}_k,g):= \begin{cases}
        1, &  \text{w.p. } p_k:=  \frac{1}{1+ e^{-\alpha \operatorname{VoI}^\omega_k}}, \\
        0, & \text{w.p. } 1-p_k.
    \end{cases}
\end{align}
We note that the above policy is indeed an approximation of $\zeta^*$ since $\hat{\zeta} \rightarrow \zeta^*$ as $\alpha \rightarrow \infty$ under the convention that $\hat{\gamma}_k = 1$ whenever $\operatorname{VoI}_k^\omega = 0$. Further,  it is also measurable with respect to $\mathcal{I}^{S,\omega}_k$.
Then, we can define the mean-field operator $\mathcal{T}$ as
\begin{align}\label{MF_operator}
    \hat{g}_{k+1} & = [\mathcal{T}(\hat{g})]_k  := \mathbb{E}_{\hat{\zeta},\mathbb P(\cdot)}[\hat{\gamma}_k] \!=\! \mathbb{E}_{\mathbb P(\cdot)}\left[\frac{1}{1+ e^{-\alpha \operatorname{VoI}^\omega_k}}\right].
\end{align}
We next prove the existence (and subsequently the uniqueness) of an approximate MFTE in the sense above, which is a pair $((\pi^*,\hat{\zeta}^*), \hat{g}^*)$, such that a given $\hat g^*$ generates the policy pair $(\pi^*, \hat \zeta^*)$ using \eqref{eq:opt_controller} and \eqref{approx_scheduling_policy}, and the policy $(\pi^*,\hat{\zeta}^*)$ regenerates $\hat g^*$ using the expectation function in the second equality in \eqref{MF_operator}. Equivalently, we prove that the pair $((\pi^*,\hat{\zeta}^*), \hat{g}^*)$ is such that $\hat{g}^*$ is a fixed point of the operator $\mathcal{T}$.

\begin{proposition}\label{prop:existence}
    Under Assumption \ref{As_ctrb}, there exists an $\alpha$--approximate MFTE for any $\alpha>0$.
\end{proposition}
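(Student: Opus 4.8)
The plan is to recast the existence of an $\alpha$--approximate MFTE as a fixed-point problem for the mean-field operator $\mathcal{T}$ of \eqref{MF_operator} and to settle it via Brouwer's fixed-point theorem. A pair $((\pi^*,\hat{\zeta}^*),\hat{g}^*)$ is a fixed point of $\Psi^\alpha\circ\Phi^\alpha$ precisely when $\hat{g}^*$ is a fixed point of $\mathcal{T}$: given $\hat{g}^*$, Proposition~\ref{Prop:Optimal_policy} and the construction \eqref{approx_scheduling_policy} determine $(\pi^*,\hat{\zeta}^*)=\Phi^\alpha(\hat{g}^*)$, and $\Psi^\alpha$ then returns exactly the sequence $\mathcal{T}(\hat{g}^*)$. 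The first observation is that the search can be confined to a compact convex set: since every coordinate of $\mathcal{T}$ is the expectation of the logistic term $\tfrac{1}{1+e^{-\alpha\operatorname{VoI}_k^\omega}}\in(0,1)$, the operator sends all of $\ell_\infty^T$ into the hypercube $\mathcal{C}:=\{g\in\ell_\infty^T \mid g_k\in[0,1]\}$, which is compact and convex in the finite-dimensional space $\ell_\infty^T$. Hence $\mathcal{T}:\mathcal{C}\to\mathcal{C}$ is a self-map, and it remains only to establish its continuity.

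Continuity of $\mathcal{T}$ is the heart of the argument, and I would obtain it by combining a backward and a forward induction over the finite horizon $[T]$. For the backward step, I use the sufficient-statistic reduction whereby $V_k^\omega$ depends on $\mathcal{I}_k^{S,\omega}$ only through the current error $e_k$, so that it may be written $V_k^\omega(e_k,g)$, and argue by downward induction from $V_T^\omega\equiv0$ that each $V_k^\omega(\cdot,\cdot)$ is jointly continuous in $(e,g)$. Indeed, the one-step transition \eqref{eq:est_error} composes $V_{k+1}^\omega$ with an affine map and a Gaussian expectation — continuous in $(e,g)$ by dominated convergence, since $V_{k+1}^\omega$ has quadratic growth and $W$ has all moments finite (the Riccati quantities $P_k,\Gamma_k$ being well defined under Assumption~\ref{As_ctrb}) — while the minimization over the binary action set $\{0,1\}$ preserves continuity. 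Reading off \eqref{eq:cond_VF}, $\operatorname{VoI}_k^\omega$ is then a quadratic form in $e_k$, minus $g_k$, plus the future-value gap $\varepsilon_k^\omega$, each summand continuous in $(e_k,g)$; consequently the logistic weight $p_k=\tfrac{1}{1+e^{-\alpha\operatorname{VoI}_k^\omega}}$ is bounded and continuous in $(e_k,g)$ as well.

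It remains to push continuity through the expectation in \eqref{MF_operator}. For this I would verify, by a forward induction on $k$, that the law of the error process generated under the Boltzmann policy \eqref{approx_scheduling_policy} and the dynamics \eqref{eq:est_error} varies continuously with $g$: starting from the fixed initial law, each step $e_{k+1}=(1-\hat{\gamma}_k)A(\omega)e_k+W_k$ mixes two continuous transition maps with weights $p_k$ and $1-p_k$, just shown continuous, so weak continuity of the law of $e_k$ in $g$ propagates. Since $[\mathcal{T}(g)]_k=\mathbb{E}[p_k(e_k,g)]$ integrates a bounded continuous integrand against the type law $\mathbb{P}(\cdot)$, the Gaussian noise, and this continuously-varying law of $e_k$, a routine weak-convergence estimate gives continuity of each coordinate of $\mathcal{T}$, hence of $\mathcal{T}:\mathcal{C}\to\mathcal{C}$. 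Brouwer's theorem then yields $\hat{g}^*=\mathcal{T}(\hat{g}^*)$, and together with $(\pi^*,\hat{\zeta}^*)=\Phi^\alpha(\hat{g}^*)$ this is the claimed $\alpha$--approximate MFTE. The principal obstacle I anticipate is exactly the coupled forward--backward structure — the value functions entering $\operatorname{VoI}_k^\omega$ are built by a backward recursion while the error law against which they are integrated is propagated forward — compounded by the need to interchange limits with Gaussian expectations of quadratically growing integrands; finiteness of the horizon $T$ and boundedness of the logistic map are what make both inductions terminate cleanly.
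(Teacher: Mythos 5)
Your proposal is correct and, at its core, takes the same route as the paper's proof of Proposition~\ref{prop:existence}: identify the $\alpha$--approximate MFTE with a fixed point of the operator $\mathcal{T}$ in \eqref{MF_operator}, observe that $\mathcal{T}$ maps the compact convex hypercube $\{g : g_k \in [0,1],\ k \in [T]\}$ into itself, prove continuity of $\mathcal{T}$ by backward induction over the finite horizon, and conclude with Brouwer's fixed-point theorem; your backward step (continuity of the value functions, hence of $\operatorname{VoI}_k^\omega$ via \eqref{eq:cond_VF}, hence of the logistic weight) mirrors the paper's induction on the state-action value function $Q^{*,\hat g}_k$. Where you genuinely go beyond the paper is your forward induction. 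The paper asserts that continuity of $\mathcal{T}$ follows once $\operatorname{VoI}_k^\omega$ is shown continuous in $\hat g$ for a \emph{fixed} information set, and stops there; it never addresses the fact that the expectation in \eqref{MF_operator} is taken under the law of the error $e_k$, which itself depends on $\hat g$ because the past Boltzmann actions $\hat\gamma_{0:k-1}$ entering \eqref{eq:est_error} are drawn with $\hat g$--dependent probabilities. Your weak-continuity argument for the error law closes exactly this hole, at the price of requiring joint continuity in $(e,g)$ and the dominated-convergence bookkeeping you describe. An even more elementary patch, in keeping with the paper's style, would be to expand the expectation over the finitely many ($2^k$) action histories: conditional on a history and a noise realization, $e_k$ does not depend on $\hat g$, each history's probability is a finite product of logistic factors continuous in $\hat g$, and bounded convergence finishes the job. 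Either way, your version is the more complete argument; the paper's reduction of continuity of $\mathcal{T}$ to pointwise continuity of $\operatorname{VoI}_k^\omega$ is, strictly speaking, a gap that your proof repairs.
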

\begin{proof}
    A detailed proof is provided in Appendix I. In summary, the proof relies on first proving that the mean-field operator $\mathcal{T}$, as defined in \eqref{MF_operator}, is a continuous function in $\hat g$. Subsequently, Brouwer's fixed point theorem is applied to show the existence of at least one MFTE for any $\alpha >0$.
\end{proof}

In addition, we can guarantee the contraction of the MF operator $\mathcal{T}$ for sufficiently low values of $\alpha$, which allows us to prove a stronger result on the uniqueness of the approximate MFTE as follows.

\begin{theorem}\label{thm:uniqueness}
    Suppose Assumption \ref{As_ctrb} holds. Then, the mean-field operator $\mathcal{T}$ is a contraction, if 
    \begin{align}\label{contraction_condition}
        2 \alpha \lambda T < 1.
    \end{align}
    Furthermore, under condition \eqref{contraction_condition}, there exists a unique $\alpha$--approximate MFTE, that is the fixed point of the operator $\!\mathcal{T}\!$.
\end{theorem}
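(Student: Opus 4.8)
The plan is to establish the contraction property of $\mathcal{T}$ directly from its definition in \eqref{MF_operator}, and then invoke the Banach fixed point theorem to conclude uniqueness. First I would fix two arbitrary mean-field trajectories $\hat{g}, \tilde{g} \in \ell_\infty^T$ and aim to bound $\|\mathcal{T}(\hat{g}) - \mathcal{T}(\tilde{g})\|_\infty$ componentwise. For each $k$, the difference $[\mathcal{T}(\hat g)]_k - [\mathcal{T}(\tilde g)]_k$ is an expectation (over the type $\omega$ and the noise) of the difference of two sigmoid functions $\sigma(\alpha \operatorname{VoI}^\omega_k)$ evaluated at the respective VoI values, where $\sigma(x) := 1/(1+e^{-x})$. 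The key analytic fact I would use is that $\sigma$ is globally Lipschitz with $|\sigma'| \le 1/4$, so that $|\sigma(\alpha a) - \sigma(\alpha b)| \le \tfrac{\alpha}{4}|a - b|$.

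The crux is then to control how much $\operatorname{VoI}^\omega_k$ changes when the mean-field trajectory is perturbed from $\hat g$ to $\tilde g$. From \eqref{eq:cond_VF}, $\operatorname{VoI}_k^\omega$ depends on $g$ in two places: the explicit linear term $-g_k$, and implicitly through the continuation terms $\varepsilon_k^\omega$, which are differences of value functions $V_{k+1}^\omega$ that themselves depend on the entire future tail of the trajectory. I expect this dependence of the value-to-go on $g$ to be the main obstacle. The plan is to bound $|\operatorname{VoI}_k^\omega(\hat g) - \operatorname{VoI}_k^\omega(\tilde g)|$ by a sum of perturbations of $g$ over the remaining horizon. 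Examining the cost-to-go definition, each $g_t$ enters the running sensor cost only through the term $\lambda g_t \gamma_t$ with $\gamma_t \in \{0,1\}$, so a perturbation in any single $g_t$ changes the value function by at most $\lambda \|\hat g - \tilde g\|_\infty$ (since $\gamma_t \le 1$ and the optimal/Boltzmann policy can only improve on the perturbed cost). Accumulating over the at most $T$ future stages that feed into $\varepsilon_k^\omega$ and the explicit $-g_k$ term, I would obtain a bound of the form $|\operatorname{VoI}_k^\omega(\hat g) - \operatorname{VoI}_k^\omega(\tilde g)| \le 2\lambda T \|\hat g - \tilde g\|_\infty$, where the factor $2$ accounts for the two conditional branches ($\gamma_k = 0$ versus $\gamma_k = 1$) appearing in $\varepsilon_k^\omega$.

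Combining the two estimates, I would get, for each $k$,
\begin{align*}
\big|[\mathcal{T}(\hat g)]_k - [\mathcal{T}(\tilde g)]_k\big| \le \frac{\alpha}{4}\, \mathbb{E}\big[|\operatorname{VoI}_k^\omega(\hat g) - \operatorname{VoI}_k^\omega(\tilde g)|\big] \le \frac{\alpha}{4}\cdot 2\lambda T \,\|\hat g - \tilde g\|_\infty = \frac{\alpha \lambda T}{2}\,\|\hat g - \tilde g\|_\infty,
\end{align*}
where I have pulled the deterministic supremum bound outside the expectation. Taking the supremum over $k$ on the left yields $\|\mathcal{T}(\hat g) - \mathcal{T}(\tilde g)\|_\infty \le \tfrac{\alpha \lambda T}{2}\|\hat g - \tilde g\|_\infty$. (The precise constant will depend on exactly how the telescoping over the horizon is bookkept; the paper's stated threshold $2\alpha\lambda T < 1$ suggests a contraction modulus of the form $2\alpha\lambda T$, which I would recover by a slightly more conservative accounting of the Lipschitz constant of $\sigma$ or the horizon terms.) Since $\ell_\infty^T$ is a complete metric space, whenever the modulus is strictly below one — i.e., under condition \eqref{contraction_condition} — the Banach fixed point theorem guarantees that $\mathcal{T}$ admits a unique fixed point $\hat g^*$. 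Finally, uniqueness of $\hat g^*$ together with the fact that $\Phi^\alpha$ returns the (essentially) unique team-optimal controller and the well-defined Boltzmann sensor policy for that trajectory yields a unique $\alpha$--approximate MFTE, completing the proof.
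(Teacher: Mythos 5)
Your proposal is correct and takes essentially the same route as the paper: both reduce the contraction estimate to a Lipschitz bound $|\operatorname{VoI}^{\omega}_k(\hat g^1)-\operatorname{VoI}^{\omega}_k(\hat g^2)|\le 2\lambda T\|\hat g^1-\hat g^2\|$ (which the paper proves rigorously as Lemma \ref{lem:Lip_Q} by backward induction on the state-action value function, with a case analysis over the possibly different minimizing actions --- the step your telescoping/``policy can only improve'' sketch glosses over), combine it with an $\mathcal{O}(\alpha)$ Lipschitz estimate for the sigmoid, and invoke Banach's fixed point theorem on the complete space of bounded trajectories. The only substantive difference is in constants: you use $|\sigma'|\le 1/4$ where the paper uses $1-e^{-x}\le x$ together with a crude bound on the sigmoid product, so your contraction modulus $\alpha\lambda T/2$ is slightly sharper than the paper's $2\alpha\lambda T$, and condition \eqref{contraction_condition} remains (more than) sufficient under your accounting.
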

\begin{proof}
    A detailed proof is provided in Appendix I. Briefly, we use the contractivity of $\mathcal{T}$ under the condition \eqref{contraction_condition} to employ Banach's fixed point theorem to show the existence of a unique $\alpha$--approximate MFTE.
\end{proof}

The above results show the existence of an approximate MFTE which is unique if the parameter $\alpha$ is small enough. Further, we would also like to note from the proof of Theorem \ref{thm:uniqueness} that the contraction condition in \eqref{contraction_condition} is only a sufficient one and one may be able to obtain a more precise characterization of the `smallness' of $\alpha$. Furthermore, in the numerical simulation section, we will compute an approximate MFTE even when  \eqref{contraction_condition} is not satisfied.

Now that we have characterized the approximate MFTE for the mean-field game, we next proceed toward proving that this MFTE satisfies the approximate Nash property (as formalized later in Section \ref{sec:eps_nash}) for the original finite-team game problem.

\section{Performance of Mean-Field Approximation}\label{sec:perf_approx}
In this and the subsequent section, we return to the finite-team setup of Section \ref{sec:formulation} and show that the $\alpha$--approximate MFTE as computed in the previous section constitutes an ($\epsilon_a,\epsilon_b$)--Nash equilibrium for the finite-team game problem. The term $\epsilon_a$, as we will see, arises as a result of the approximation parameter $\alpha$ in the sensor policy and the term $\epsilon_b$ term arises due to the infinite team approximation. 

Suppose now that team $i$ deviates toward using a sensor policy $\xi^i \in \Xi^{cen,i}$ instead of the $\alpha$--approximate MFTE policy $\hat \zeta^*$. Moreover, let us denote an action chosen from $\xi^i$ at instant $k$ as $\tilde \gamma_k$ and define the empirical average of the sensor actions at instant $k$ as $\gamma^{N,av}_k:= (\nicefrac{1}{N})\sum_{j=1}^N\gamma^j_k$. Consequently, we prove the following lemma which computes the mean deviation between the empirical average $\gamma^{N,av}$ and the approximate equilibrium mean-field trajectory $\hat g^*$. This formalizes the heuristic assertion made at the beginning of the previous subsection \ref{subsec:MFTG} regarding the introduction of the mean-field term $g$ in \eqref{Scheduling_CostMF}.

\begin{lemma}\label{lem:Approx_MFE}
    Let team $i$ employ the sensor policy $\xi^i \in \Xi^{cen,i}$ while all the other teams employ the equilibrium sensor policy $\hat \zeta^*$. Then, we have that
    \begin{align}\label{eq:MFE_approx}
        \frac{1}{T} \sum_{k=0}^{T-1} \mathbb E\Big[\left|\gamma^{N,av}_k - \hat g^*_k \right|\Big] = \mathcal{O}\Big( \frac{1}{\sqrt{N}} + \epsilon_{P,N} \Big),
    \end{align}        
    where $\epsilon_{P,N} = \sum_{\omega \in \Omega }| \mathbb{P}_N(\omega) - \mathbb P(\omega) |$.
\end{lemma}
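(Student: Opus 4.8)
The plan is to bound the per-step quantity $\mathbb{E}\big[|\gamma^{N,av}_k - \hat g^*_k|\big]$ uniformly in $k$ by a triangle-inequality decomposition into three pieces -- the deviating team's own contribution, a stochastic fluctuation of the non-deviating teams around their type-conditional means, and a deterministic type-mismatch term -- and then average over $k$. The structural fact that makes this tractable is that the $N-1$ non-deviating teams each play the \emph{decentralized} equilibrium policy $\hat\zeta^*$, which is a function only of their own local information and the \emph{fixed} trajectory $\hat g^*$ (recall from Proposition \ref{Prop:Optimal_policy} that $\operatorname{VoI}_k^\omega$ depends on $g$ only through the pre-computed $\hat g^*_k$, not on the realized channel utilization). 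Consequently team $i$'s deviation $\xi^i$ does not influence any other team, and the error processes $e^j_k$ together with the randomized actions $\gamma^j_k$ for $j \neq i$ are mutually independent across $j$ once we condition on the team types, since each depends only on team $j$'s own noise $W^j$, initial state $X^j_0$, and independent policy randomization.

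First I would isolate team $i$. Writing $\gamma^{N,av}_k = \tfrac{1}{N}\gamma^i_k + \tfrac{N-1}{N}\bar\gamma^{-i}_k$ with $\bar\gamma^{-i}_k := \tfrac{1}{N-1}\sum_{j\neq i}\gamma^j_k$, and using $\tfrac{N-1}{N}\bar\gamma^{-i}_k = \bar\gamma^{-i}_k - \tfrac{1}{N}\bar\gamma^{-i}_k$, the two stray terms are each bounded by $1/N$ in absolute value because $\gamma^j_k \in \{0,1\}$, contributing $\mathcal{O}(1/N)$. It then remains to control $\mathbb{E}\big[|\bar\gamma^{-i}_k - \hat g^*_k|\big]$. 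Let $\bar p_k(\omega) := \mathbb{E}[\gamma^j_k \mid \omega_j = \omega]$ denote the type-conditional equilibrium transmission probability at step $k$, so that by the mean-field fixed-point relation \eqref{MF_operator} one has $\hat g^*_k = \sum_{\omega\in\Omega}\mathbb{P}(\omega)\bar p_k(\omega)$ (up to the one-step index convention of the operator $\mathcal{T}$). I split $\bar\gamma^{-i}_k - \hat g^*_k = T_1 + T_2$ with $T_1 := \tfrac{1}{N-1}\sum_{j\neq i}(\gamma^j_k - \bar p_k(\omega_j))$ and $T_2 := \tfrac{1}{N-1}\sum_{j\neq i}\bar p_k(\omega_j) - \hat g^*_k$.

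For the fluctuation term $T_1$, conditioning on the collection of types $\{\omega_j\}_{j\neq i}$ the summands are independent, zero-mean (by definition of $\bar p_k$), and bounded in $[-1,1]$, so the cross-covariances vanish and $\mathbb{E}[T_1^2 \mid \{\omega_j\}] = (N-1)^{-2}\sum_{j\neq i}\mathrm{Var}(\gamma^j_k \mid \omega_j) \le \tfrac{1}{4(N-1)}$; Jensen's inequality then gives $\mathbb{E}|T_1| \le \tfrac{1}{2\sqrt{N-1}} = \mathcal{O}(1/\sqrt N)$. For the bias term $T_2$, writing $\tfrac{1}{N-1}\sum_{j\neq i}\bar p_k(\omega_j) = \sum_{\omega}\mathbb{P}^{-i}_{N-1}(\omega)\,\bar p_k(\omega)$ in terms of the empirical type frequency $\mathbb{P}^{-i}_{N-1}$ over the teams $j \neq i$ and using $0 \le \bar p_k(\omega) \le 1$ yields $|T_2| \le \sum_{\omega}|\mathbb{P}^{-i}_{N-1}(\omega) - \mathbb{P}(\omega)|$. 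Since deleting one team perturbs each empirical frequency by at most $\mathcal{O}(1/N)$, a further triangle inequality against $\mathbb{P}_N$ gives $|T_2| \le \mathcal{O}(1/N) + \sum_{\omega}|\mathbb{P}_N(\omega) - \mathbb{P}(\omega)| = \mathcal{O}(1/N) + \epsilon_{P,N}$. Collecting the three pieces, summing over $k$, and dividing by $T$ leaves every bound unchanged in order, proving \eqref{eq:MFE_approx}.

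I expect the main obstacle to be making the conditional-independence argument for $T_1$ fully rigorous: one must verify that the joint law of $\{(e^j_\cdot,\gamma^j_\cdot)\}_{j\neq i}$ factorizes across teams given the types -- which hinges on the decentralized, $\hat g^*$-driven nature of $\hat\zeta^*$ and on the independence of the primitive noises and initializations -- and one must correctly identify $\bar p_k(\omega)$ as the type-conditional mean so that $\hat g^*$ is exactly its $\mathbb{P}$-average. The only other bookkeeping nuance is the one-step index shift in the definition of $\mathcal{T}$, which affects the comparison only through $\mathcal{O}(1/T)$ boundary terms in the time-average and is therefore harmless to the stated rate.
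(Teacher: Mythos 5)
Your proposal is correct and follows essentially the same route as the paper: a three-way triangle-inequality decomposition into the deviating team's $\mathcal{O}(1/N)$ contribution, a stochastic fluctuation term bounded by $\mathcal{O}(1/\sqrt{N})$ via cross-team independence, a second-moment bound, and Jensen's inequality, and a type-mismatch term yielding $\epsilon_{P,N}$. The only differences are bookkeeping ones (you split off team $i$'s action directly rather than replacing it by its equilibrium counterpart, and you work with the empirical type distribution over $N-1$ teams, costing an extra harmless $\mathcal{O}(1/N)$), so the two arguments are the same in substance.
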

\begin{proof}

Consider the following inequality:
\begin{small}
\begin{align}\label{Approx_term}
    & \sum_{k=0}^{T-1} \mathbb{E}\Big[ \Big|\frac{1}{N}\sum_{j=1}^N\gamma^j_k - \hat{g}^*_k \Big|\Big]
    \leq \underbrace{\sum_{k=0}^{T-1} \mathbb{E}\Big[ \Big|\frac{1}{N}\sum_{j=1}^N(\gamma^j_k - \hat{\gamma}^j_k) \Big|\Big]}_{\text{=:Term 1}} +\!\! \underbrace{\sum_{k=0}^{T-1}\!\mathbb{E}\Big[  \Big| \!\frac{1}{N}\sum_{j=1}^N\hat \gamma^j_k \!-\! \mathbb{E}[\hat g^{*,j}_k] \Big|\Big]}_{\text{=:Term 2}} \! +\! \underbrace{\sum_{k=0}^{T-1}\mathbb{E} \Big[\Big| \!\frac{1}{N}\!\sum_{j=1}^N\mathbb{E}[ \hat g^{*,j}_k] \!-\! \hat g^*_k \Big|\Big]}_{\text{=:Term 3}}.
\end{align}
\end{small}
We now bound each of the three terms in \eqref{Approx_term} below. To that end, let the action chosen by the sensor of team $i$ be given as $\tilde \gamma^{i}_k \sim \xi^{i}_k(\mathcal{I}^{S,i}_k)$ with $\mathcal{I}^{S,i}_k$ defined in \eqref{eq:info_cent}. Then, Term 1 in \eqref{Approx_term} can be bounded as follows:
\begin{align}\label{perf:B1}
    \text{Term 1} & = \sum_{k=0}^{T-1} \mathbb{E}\Big[ \Big|\frac{1}{N}\sum_{j \in [N]\setminus \{i\}}(\hat \gamma^j_k - \hat{\gamma}^j_k) + \frac{1}{N} (\tilde \gamma^i_k - \hat{\gamma}^i_k)\Big|\Big] \nonumber \\
    & = \sum_{k=0}^{T-1} \mathbb{E}\Big[ \Big|\frac{1}{N}\sum_{j \in [N]\setminus \{i\}}(\hat \zeta^{*,j}_k(\mathcal{I}^{S,j}_{k}) - \hat{\zeta}^{*,j}_k(\mathcal{I}^{S,j}_{k})) + \frac{1}{N} (\tilde \gamma^{i}_k - \hat{\gamma}^i_k)\Big|\Big] \nonumber \\
    & = \sum_{k=0}^{T-1} \frac{1}{N}\mathbb E [|\tilde \gamma^i_k - \hat \gamma^i_k|] \leq \frac{2T}{N},
\end{align}
where the second equality follows since teams other than the $i^{th}$ one are still following the policy $\hat \zeta^*$.

We next bound Term 2 in \eqref{Approx_term}. To that end, let us define the quantity $\bar g^{*,j}_k := \gamma^j_k - \mathbb{E}[\hat g^{*,j}_k]$. Then, we have that
\begin{align}\label{eq:temp_term2}
    & \mathbb{E}\Big[ \frac{1}{N}\sum_{j=1}^N\hat \gamma^j_k - \mathbb{E}[\hat g^{*,j}_k] \Big]^2  = \frac{1}{N^2}\mathbb{E}\Big[ \sum_{j \in [N]} \bar  g^{*,j}_k\Big]^2 \nonumber \\
    & \!= \! \frac{1}{N^2}\mathbb{E}\Big[ \sum_{j,\ell \in [N]} \bar g^{*,j}_k \bar g^{*,\ell}_k\Big] \!=\! \frac{1}{N^2}\mathbb{E}\Big[ \sum_{j \in [N]} (\bar g^{*,j}_k)^2 \Big] \! \le\! \frac{1}{N},
\end{align}
where the third equality follows since $\bar g^{*,j}_k$ and  $\bar g^{*,\ell}_k$ are uncorrelated because the system noise $W_k^i$ is independent across different teams, which leads to decoupling between the corresponding sensor policies.

Thus, it follows from \eqref{eq:temp_term2} that:
\begin{align}\label{perf:B2}
    \text{Term 2} \leq \sum_{k=0}^{T-1} \frac{1}{\sqrt{N}} \leq \frac{T}{\sqrt{N}}.
\end{align}

Finally, since ${\hat g}_k^{*,\omega}$ takes values in $[0,1]$, Term 3 in \eqref{Approx_term} can be bounded as follows:
\begin{align}\label{perf:B3}
    \text{Term 3} & = \sum_{k=0}^{T-1}\Big[ \sum_{\omega \in \Omega} \hat g^{*,\omega}_k | \mathbb{P}_N(\omega) - \mathbb P(\omega) | \Big] \nonumber \\
    & \leq T \sum_{\omega \in \Omega }| \mathbb{P}_N(\omega) - \mathbb P(\omega) |.
\end{align}

Using \eqref{perf:B1}, \eqref{perf:B2}, and \eqref{perf:B3} in \eqref{Approx_term}, we arrive at the result in \eqref{eq:MFE_approx}. The proof is thus complete.
\end{proof}

The above lemma demonstrates that the deviation of the empirical average from the approximate mean-field trajectory vanishes toward zero as the number of teams increases, and hence the latter `reasonably' approximates the former if the number of teams is large.

\section{Approximate Nash Equilibrium Analysis}\label{sec:eps_nash}

In this subsection, we prove the approximate-Nash property of the MF solution for the finite-team game problem, which is made precise below.

\begin{definition}\label{defn:epsNash}
    The set of team policies $\{\mu^{*,i} \in \mathcal{M}^i \times \Xi^{cen,i}, ~i \in [N]\}$ constitutes an $\epsilon$--Nash equilibrium for the cost functions $J^N_i$ for $i \in [N]$, if there exists $\epsilon >0$ such that
    \begin{align*}
        J^N_i(\mu^{*,i},\mu^{*,-i}) \leq \inf_{\mu^i \in \mathcal{M}^i \times \Xi^{cen,i}} J^N_i(\mu^{i},\mu^{*,-i}) + \epsilon, ~ \forall i \in [N].
    \end{align*}
\end{definition}
We also note that the infimum on the RHS of the above inequality is taken over the set of centralized equilibrium policies as defined before, hence bringing in an element of conservatism.

Let us start by defining the following cost functions:
\begin{subequations}
\begin{small}
    \begin{align}
    & \!\!J_i^N(\pi^{*,i},\hat{\zeta}^{*,i},\hat{\zeta}^{*,-i})  = \frac{1}{T} \mathbb{E}\left[ \sum_{k=0}^{T-1} \|\hat{X}^i_k\|^2_{Q(\omega_i)}\!\! +\! \|U^i_k\|^2_{R(\omega_i)}\!\! +\! \lambda \hat{\gamma}^{N,av}_k \hat{\gamma}^i_k\right],\! \label{costa}\\
     &\!\!J^\omega(\pi^*\!,\hat{\zeta}^*\!,\hat{g}^*) \!\!= \!\!\frac{1}{T} \mathbb{E}\!\left[ \sum_{k=0}^{T-1} \!\|\hat{X}_k\|^2_{Q(\omega)} \!\!+\! \|U_k\|^2_{R(\omega)} \!\!+\! \lambda \hat{g}^*_k \hat{\gamma}_k\right]\!,\! \label{costb}\\
    & \!\!J_i^N(\pi^{*,i},\xi^i, \hat{\zeta}^{*,-i}) =  \frac{1}{T} \mathbb{E}\left[ \sum_{k=0}^{T-1} \!\|{X}^i_k\|^2_{Q(\omega_i)}\! \!+\! \|U^i_k\|^2_{R(\omega_i)}\! \!+\! \lambda \tilde \gamma^{N,av}_{k} \tilde \gamma^i_k\right]\!,\!\! \label{costc}\\
    & \!\!J^\omega(\pi^*\!,{\zeta}^d\!,\hat{g}^*) \!\!=\!\! \frac{1}{T} \mathbb{E}\left[ \sum_{k=0}^{T-1} \!\!\|X^d_k\|^2_{Q(\omega)}\!\! + \!\|U_k\|^2_{R(\omega)}\!\! +\! \lambda \hat{g}^*_k \gamma^{d}_k\right]\!.\!\!\! \label{costd}
\end{align}
\end{small}
\end{subequations}
The cost \eqref{costa} is the one incurred by team $i$ in the finite-team setting when all the players follow the $\alpha$--approximate MFTE policy. The cost in \eqref{costb} is the representative team's cost under the approximate equilibrium policy and equilibrium mean-field. The cost in \eqref{costc} is incurred by team $i$ when it deviates toward using a policy $\xi^i \in \Xi^{cen,i}$ while all other teams still employ the MF policy. Finally, the cost in \eqref{costd} is incurred when a team follows a \textit{decentralized} policy $\zeta^d \in \Xi^{dec}$ under the approximate equilibrium MF trajectory. The notations of the states and actions in the above costs are under the respective policies and the trajectory distributions, and are self-explanatory.

Further, let us define $[\bar T] \subseteq [T]$ to be the set of all instants $k \in [T]$ for which $\operatorname{VoI}_k \ne 0$. Then, we state and prove another main result of the paper in the following theorem, which essentially characterizes how `well' the MFTE policy performs on the $N$--team system. 

\begin{theorem}\label{thm:eps_Nash}
    Suppose that $X_0$ is given. Then, the set $\{(\pi^{*,i}, \hat{\zeta}^{*,i}), i \in [N]\}$ of decentralized team policies constitute $(\epsilon_a,\epsilon_b)$--Nash equilibrium for the finite-team uplink scheduling game, i.e.,
\begin{align*}
& J_i^N(\pi^{*,i},\hat{\zeta}^{*,i}, \hat{\zeta}^{*,-i})  \leq  \inf_{\xi^i \in \Xi^{cen,i}} J_i^N(\pi^{*,i},\xi^i, \hat{\zeta}^{*,-i}) + \epsilon_a(\alpha) + \epsilon_b(N),
\end{align*}
where $\!\epsilon_a(\alpha) \!\!=\!\! \mathcal{O}\Big(\! \frac{1}{1\!+\!\min\limits_{k \in [\bar T]}\{e^{\alpha | \operatorname{VoI}_k\!|}\}}\!\Big)$ and $\!\epsilon_b(N) \!\!= \!\mathcal{O}\Big( \!\frac{1}{\sqrt{N}} + \epsilon_{P,N}\!\Big)$.
\end{theorem}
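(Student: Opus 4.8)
The plan is to chain the four costs \eqref{costa}--\eqref{costd} in the familiar mean-field style, but with an extra $\alpha$-dependent layer coming from the Boltzmann rule \eqref{approx_scheduling_policy}. Throughout I fix team $i$ of type $\omega_i$, read the representative costs \eqref{costb}, \eqref{costd} as conditioned on $\omega=\omega_i$, and keep the controller frozen at $\pi^{*,i}$ in every term; the latter is legitimate because the controller \eqref{eq:opt_controller} is certainty-equivalent and therefore optimal irrespective of the sensing policy. \emph{Step 1 (finite equilibrium $\to$ mean-field equilibrium).} I compare \eqref{costa} with \eqref{costb}. The state-and-control quadratic part of \eqref{costa} has the same expectation as that of \eqref{costb}, since team $i$'s dynamics \eqref{System_dynamics} are decoupled from the other teams and it runs the same decentralized policy $\hat\zeta^*$ driven by the same deterministic $\hat g^*$. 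The only discrepancy is the coupling term, where $\hat\gamma^{N,av}_k$ stands in for $\hat g^*_k$; bounding $\frac{\lambda}{T}\mathbb E[\sum_k |\hat\gamma^{N,av}_k-\hat g^*_k|\,\hat\gamma^i_k]\le \frac{\lambda}{T}\mathbb E[\sum_k|\hat\gamma^{N,av}_k-\hat g^*_k|]$ through Lemma \ref{lem:Approx_MFE} gives $J_i^N(\pi^{*,i},\hat\zeta^{*,i},\hat\zeta^{*,-i})\le J(\pi^*,\hat\zeta^*,\hat g^*)+\mathcal O(\epsilon_b(N))$ with $\epsilon_b(N)=\mathcal O(1/\sqrt N+\epsilon_{P,N})$.

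\emph{Step 2 (Boltzmann suboptimality, the $\epsilon_a$ term).} In the decoupled mean-field problem with frozen $\hat g^*$, the threshold policy $\zeta^*$ of Proposition \ref{Prop:Optimal_policy} is optimal over $\Xi^{dec}$, so $J(\pi^*,\zeta^*,\hat g^*)\le J(\pi^*,\zeta^d,\hat g^*)$ for all $\zeta^d\in\Xi^{dec}$; it remains to bound how much $\hat\zeta^*$ loses against $\zeta^*$. The two differ only through the randomization \eqref{approx_scheduling_policy}: at stage $k$ the Boltzmann rule picks the action opposite to $\mathbb I[\operatorname{VoI}_k\ge 0]$ with probability $(1+e^{\alpha|\operatorname{VoI}_k|})^{-1}$. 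I would propagate this stage-wise discrepancy through the backward recursion for the cost-to-go $V_k^\omega$: a wrong decision at stage $k$ incurs an immediate penalty of order $|\operatorname{VoI}_k|$ and perturbs the error recursion \eqref{eq:est_error}, affecting later stages, but over the finite horizon $T$ with $\Gamma_k\succeq 0$ bounded and $W_k$ of finite moments the total penalty stays bounded in expectation. Since each mis-selection probability is at most $\max_k(1+e^{\alpha|\operatorname{VoI}_k|})^{-1}=(1+\min_k e^{\alpha|\operatorname{VoI}_k|})^{-1}$, summing the $T$ stage contributions yields $J(\pi^*,\hat\zeta^*,\hat g^*)\le J(\pi^*,\zeta^*,\hat g^*)+\epsilon_a(\alpha)$ with $\epsilon_a(\alpha)=\mathcal O\big((1+\min_{k\in[T]} e^{\alpha|\operatorname{VoI}_k|})^{-1}\big)$.

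\emph{Step 3 (mean-field optimum $\to$ finite deviation).} Fix any centralized deviation $\xi^i\in\Xi^{cen,i}$ with action $\tilde\gamma^i_k$. In \eqref{costc} I replace $\tilde\gamma^{N,av}_k$ by $\hat g^*_k$ at a cost of $\frac{\lambda}{T}\mathbb E[\sum_k|\tilde\gamma^{N,av}_k-\hat g^*_k|]=\mathcal O(\epsilon_b(N))$, again via Lemma \ref{lem:Approx_MFE} in its genuine-deviation form. After this substitution team $i$ faces a single-team problem against the frozen $\hat g^*$, i.e. a cost of the form \eqref{costd}; and because the substituted cost now depends on the other teams only through the deterministic $\hat g^*$, while the extra centralized information in $\calI^{S,i}_k$ (the other teams' past actions) is independent of team $i$'s own noise and state, that information is useless for the decoupled problem. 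Hence the value of any $\xi^i$ is lower-bounded by $\inf_{\zeta^d\in\Xi^{dec}}J(\pi^*,\zeta^d,\hat g^*)=J(\pi^*,\zeta^*,\hat g^*)$, giving $J_i^N(\pi^{*,i},\xi^i,\hat\zeta^{*,-i})\ge J(\pi^*,\zeta^*,\hat g^*)-\mathcal O(\epsilon_b(N))$, uniformly in $\xi^i$.

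Concatenating the three steps and taking the infimum over $\xi^i\in\Xi^{cen,i}$ on the right produces the asserted inequality with the stated $\epsilon_a(\alpha)$ and $\epsilon_b(N)$. Steps 1 and 3 are routine applications of Lemma \ref{lem:Approx_MFE} together with the decoupling of the team dynamics. The main obstacle is Step 2: carefully controlling how a ``wrong'' Boltzmann decision compounds through the error recursion \eqref{eq:est_error} and the backward cost-to-go, and showing the accumulated excess cost is dominated by the worst-stage mis-selection probability $(1+\min_k e^{\alpha|\operatorname{VoI}_k|})^{-1}$ rather than growing uncontrollably with the horizon. A secondary delicate point is the information-structure argument in Step 3, that the centralized observations confer no advantage once the coupling term is frozen to $\hat g^*$; this rests on the independence of the teams' driving noises assumed in the model and mirrors the decorrelation already exploited in Lemma \ref{lem:Approx_MFE}.
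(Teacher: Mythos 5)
Your proposal is correct and follows essentially the same route as the paper: your Steps 1--3 correspond exactly to the three terms in the paper's decomposition, bounded respectively by Lemma \ref{eq:First_term_bound}, Proposition \ref{prop:Second_term_bound}, and Lemma \ref{eq:Third_term_bound}, with Lemma \ref{lem:Approx_MFE} doing the work for the two $N$-dependent terms just as you use it. The backward-recursion argument you sketch for Step 2 (mis-selection probability damping a state-dependent penalty) is precisely what the paper's Appendix III makes rigorous, via an inductive comparison of the Boltzmann and optimal state-action value functions supported by the boundedness estimate of Lemma \ref{lem:Q_bounded}, which controls the $\|e_k\|^2$-growth of the penalty against the exponentially small probability of a wrong action.
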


\begin{proof}
To establish the inequality in Theorem \ref{thm:eps_Nash}, consider the following:
\begin{align}
    & J_i^N(\pi^{*,i},\hat{\zeta}^{*,i},\hat{\zeta}^{*,-i}) - \inf_{\xi^i \in \Xi^{cen,i}} J_i^N(\pi^{*,i},\xi^i, \hat{\zeta}^{*,-i}) \nonumber
\end{align}
\begin{subequations}
    \begin{align}
     & = J^N_i(\pi^{*,i}, \hat{\zeta}^{*,i},\hat{\zeta}^{*,-i}) - J^\omega(\pi^{*},\hat{\zeta}^*,\hat{g}^*) \label{first_term}\\
    &  + J^\omega(\pi^{*},\hat{\zeta}^*,\hat{g}^*) - \inf_{\zeta^d \in \Xi^{dec}} J^\omega(\pi^{*},{\zeta^d},\hat{g}^*) \label{second_term}\\
    &  + \!\!\inf_{\zeta^d \in \Xi^{dec}} \!J^\omega(\pi^{*},{\zeta^d},\hat{g}^*) - \!\!\inf_{\xi^i \in \Xi^{cen,i}} \!J_i^N(\pi^{*,i},\xi^i, \hat{\zeta}^{*,-i}). \label{third_term}
\end{align}
\end{subequations}
The proofs for bounding each of the above terms are provided in Appendices II and III. The term in \eqref{first_term} is bounded by Lemma \ref{eq:First_term_bound} as in Appendix II. Next, to bound the term in \eqref{second_term}, we first prove the boundedness of the state action value function as in Lemma \ref{lem:Q_bounded} in Appendix III and consequently use Proposition \ref{prop:Second_term_bound} as in Appendix III. Finally, the term in \eqref{third_term} is bounded by Lemma \ref{eq:Third_term_bound} as in Appendix II.

Combining the results from Lemmas \ref{eq:First_term_bound}, \ref{eq:Third_term_bound} and Proposition \ref{prop:Second_term_bound}, we arrive at
\begin{align}
    & J_i^N(\pi^{*,i},\hat{\zeta}^{*,i}, \hat{\zeta}^{*,-i}) - \inf_{\xi^{i} \in \Xi^{cen,i}} J_i^N(\pi^{*,i},\xi^{i}, \hat{\zeta}^{*,-i}) \nonumber \\
    & \hspace{0.5cm} \leq \epsilon_a(\alpha) + \mathcal{O}\Big(\frac{1}{\sqrt{N}} + \epsilon_{P,N} \Big) = \epsilon_a(\alpha) + \epsilon_b(N).
\end{align}
This completes the proof of the theorem.

\end{proof}

Theorem \ref{thm:eps_Nash} states that the $\alpha$--approximate MFTE policy derived using the infinite agent system behaves as an approximate Nash equilirium solution for the original finite-team game, where the approximation results from 1) using the policy $\hat \zeta^*$ instead of the optimal sensing policy $\zeta^*$, and 2) using infinite-team system as a proxy for the $N$--team system. This final result completes our solution to Problem \ref{Prob:Stoch_min_Overall} using the MFTG approach as illustrated in Fig. \ref{Fig:flowchart}.

\section{Numerical Experiments}\label{sec:num_exp}

In this section, we present two numerical simulations to evaluate the performance of the MFTE solution derived in Section \ref{sec:eqb_analysis}. Throughout, we consider the type set to be singleton, i.e., all the teams are homogeneous.

\begin{example}
   In the first example, we consider teams with open-loop unstable scalar plants and parameters: $A = 1.5,$ $ B = 2,$ $ Q=2,$ $ R=1,$ $ \Sigma_0 = 0.1,$ $ K_W = 0.3$ and a time horizon of $T=51$. We then compute the controller policy by solving  the Riccati equation \eqref{eq:ARE_P} and subsequently, the sensor policy by using Proposition \ref{Prop:Optimal_policy} and \eqref{approx_scheduling_policy}. Finally, we compute the approximate MFTE by iterated application of the MF operator $\mathcal{T}$ as in \eqref{MF_operator}. Next, we note that since the approximate sensing policy in \eqref{approx_scheduling_policy} depends on the value of information, which in turn relies on the conditional state value function, we employ supervised learning with linear function approximation \cite{bertsekas1996neuro} to approximate it. Specifically, we start by parametrizing the state value function as $V_k(e,g) = \theta_{k}^\top \phi_1(e) + \phi_2(g)$, where $\theta_{k}$ represent learnable parameters and $\phi_1(\cdot), \phi_2(\cdot)$ denote the basis functions which are a function of the error $e$ and the mean-field $g$, respectively. We then apply value iteration using this parametrized function and refine the coefficients through supervised learning, minimizing the mean-squared error loss. The process terminates once convergence is achieved within a predefined threshold $\delta > 0$.
   
   For purposes of the first three studies, we will consider polynomial basis functions of degree 2, i.e., $\phi_i(x) =[1,~x,~x^2]$.
   Later in study 4), we also analyze the impact of the degree of the basis functions on the average cost of each team.

 \begin{figure}[h]
 \vspace{-3mm}
\centerline{\includegraphics[width=0.7\columnwidth]{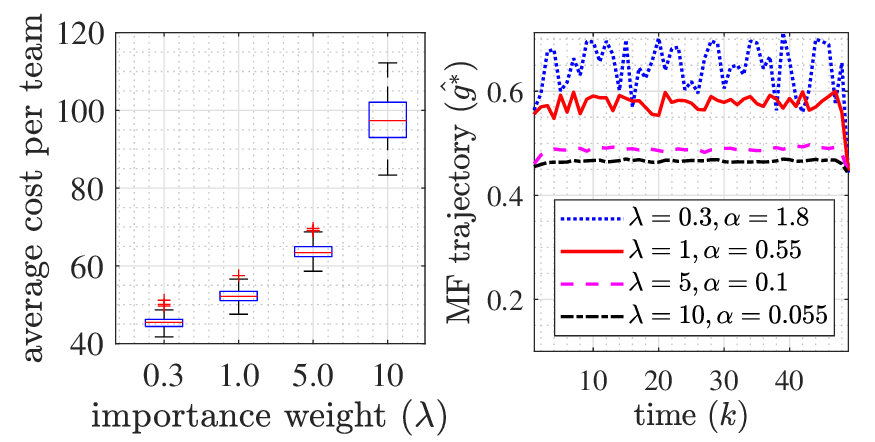}}
	\caption{\small{The left subfigure shows the variation in average cost per team with the importance weight $\lambda$ and the right subfigure shows the variation of the corresponding equilibrium MF trajectory with time.}}
	\label{Fig:costVslambda}
    \vspace{-0.4cm} 
\end{figure}

\textbf{1) Variation of average cost vs $\mathbf{\lambda}$ and $\mathbf{\hat g^*}$ vs time: }In the first study, we plot the variation of the average cost per team versus the importance weight $\lambda$ in Figure \ref{Fig:costVslambda} (left subfigure) for four different values of $\lambda = 0.3, 1.0, 5.0$ and $10$ with corresponding values of $\alpha$ as $1.8, 0.55, 0.1$ and $0.055$. The
figure shows a box plot depicting the median (red line) and
spread (box) of the average cost per team over 100 runs for
each value of $\lambda$ and $N=50$ teams, under the MFTE policy. From the same, we observe that the average cost increases with the increase in $\lambda$ which results from an increase in overall estimation error due to increase in the price of communication $\lambda$. Alongside (in the right subfigure), we show the variation of equilibrium MF trajectory $\hat g^*$ with time. From the same, we observe a decrease in channel utilization with the price of communication increases, aligned with intuition.

\textbf{2) Variation of average cost vs Boltzman parameter $\mathbf{\alpha}$: }Next, in Figure \ref{Fig:CostFixedLambda} (left subfigure), we analyze the variation of the average cost per team under the MFTE policy for different values of $\alpha$, using a box plot with $N=50$ and $\lambda = 5$. 
The results indicate a decreasing trend in average cost as $\alpha$ increases. This behavior can be attributed to the fact that a higher value of $\alpha$ reduces the level of approximation in constructing the policy $\hat \zeta$ in \eqref{approx_scheduling_policy} from the optimal policy $\zeta$ thereby leading to improved policy accuracy and a lower overall cost.


\begin{figure}[h]
	\centerline{\includegraphics[width=0.7\columnwidth]{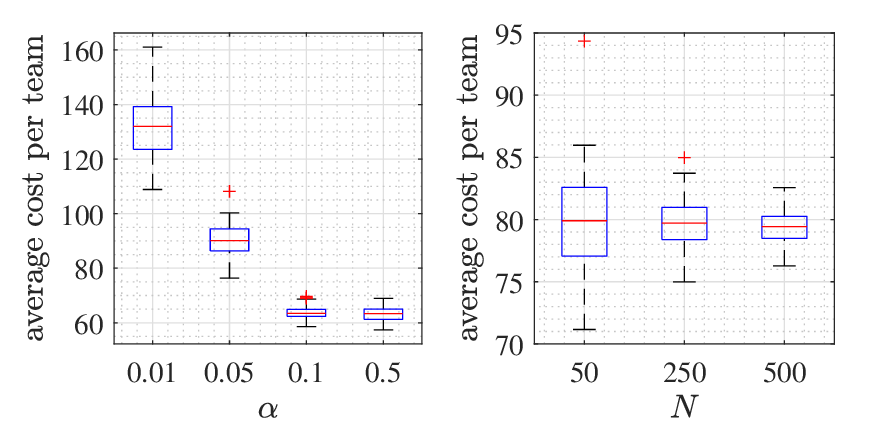}}
	\caption{\small{The left plot shows the variation of average cost per team vs the Boltzmann parameter ($\alpha$) for a fixed $\lambda = 5$; the right plot shows the variation of the average cost per team vs $N$.}}
	\label{Fig:CostFixedLambda}
    \vspace{-0.4cm} 
\end{figure}

\textbf{3) Variation of average cost vs $N$: }Next, in Figure \ref{Fig:CostFixedLambda} (right subfigure) we present box plots for the average cost per team over 100 runs for different values of $N$ and $\alpha = 0.1$ and $\lambda = 5$. The results show that as $N$ increases, the spread of the box plot decreases, indicating that the mean-field approximation becomes more accurate with a larger number of teams. This aligns with the theoretical expectation that the MFTG formulation provides a better approximation as the system size grows.

\textbf{4) Variation of average cost vs degree of basis functions: }Finally, in Figure \ref{Fig:CostVsDegree}, we study the effect of increasing the number of basis functions in $\phi_1(\cdot)$ (used in function approximation) on the average cost of the teams. Specifically, upon defining $\phi_i^d(x) = [1,~x, ~x^2,\cdots, x^d]$, we plot the average cost per team on the y-axis against the polynomial degree $d$ on the x-axis.
For this experiment, we fix the degree of $\phi_2(\cdot)$ to 2.
The values of $\lambda$ and $\alpha$ are set to 5 and 0.11, respectively. Figure \ref{Fig:CostVsDegree} shows that as the polynomial degree increases from linear to cubic, the average cost takes nearly similar values with their mean values recorded to be 77.2803, 79.7316, and 78.1966, respectively. This suggests that a quadratic degree for the basis functions provides a sufficiently accurate approximation of the value function within the assumed function class. While increasing the number of basis functions can enhance the representational power, it may also lead to overfitting, resulting in significantly larger parameter values and reduced generalization to new data points. Therefore, selecting the appropriate polynomial degree $d$ requires a careful balance between accuracy and generalization.

\begin{figure}[h]
	\centerline{\includegraphics[width=0.6\columnwidth]{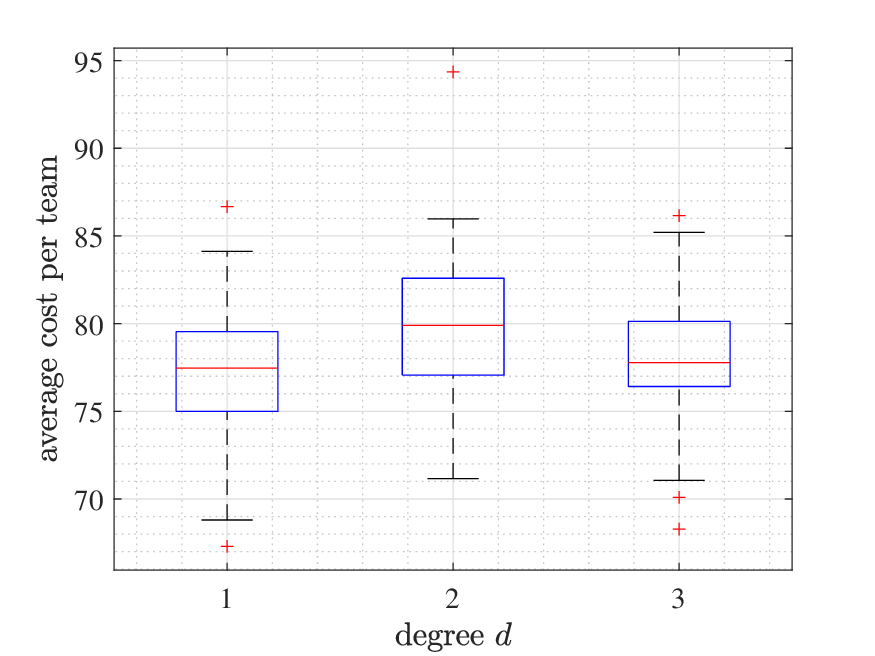}}
	\caption{\small{The plot shows the variation of the average cost per team versus the number of teams ($N$).}}
	\label{Fig:CostVsDegree}
    \vspace{-0.4cm} 
\end{figure}
\end{example}

\begin{example}
    In our second example, we simulate an OptiTrack motion capture system \cite{li2021real}, which serves as a representative prototype of our formulation. These systems typically involve multiple mobile robots, each analogous to a team in our formulation, that operate with onboard controllers. These controllers receive absolute state information via an array of infrared cameras mounted on the ceiling of an indoor facility. The motion capture system processes real-time positional data and transmits it over a shared Wi-Fi network, which is often bandwidth-constrained due to multiple devices communicating simultaneously. Such limitations can lead to network congestion, impacting the control performance. Thus, optimizing the transmission policy to prioritize essential state updates becomes crucial for maintaining precise control while mitigating the impact of network limitations.

    For our setup, we model the robot dynamics using second-order integrator dynamics, represented in continuous-time as:
    \begin{small}
    \begin{align*}
         dx^i(t) = \begin{bmatrix}
             0 & 1 \\
             0 & 0
         \end{bmatrix} dt + \begin{bmatrix}
             0 \\
             1
         \end{bmatrix} B^i u^i(t) dt + G^i dB^i(t), ~~\forall i \in [N],
    \end{align*}
    \end{small}
    \hspace{-1.8mm}where $B^i(t)$ denotes the standard Brownian motion, $G^i$ is a matrix of suitable dimensions, and we slightly abuse the notation $x$ to denote the state of the continuous-time system as well. Discretizing the above system with a step-size of $h$, one can arrive at its discrete-time state-space representation given by  \eqref{System_dynamics} with system matrices $A = [1, h; 0, 1], B = [0,h]^\top$. Moreover, for the ensuing simulations, we take the cost coefficients to be $Q = Diag(2,1)$ and $R = 1$ with initial state covariance to be $\Sigma_0 = 0.1$ and system noise covariance to be $K_W = 0.3$. Finally, we learn the value function in the same way as earlier within the linear function class with basis functions with degree 2, and conduct the following studies.

    \textbf{1) Variation of signals over time:} In our first analysis, we examine a system with $N=10$ teams and a horizon of $T = 41$ and illustrate the temporal evolution of various signals for a representative team (team 1) in Figure \ref{Fig:Sim_robots}. We set the parameters as $\lambda = 4$ and $\alpha = 0.3$. The top-left subplot displays the variation of the team's position over time along with its corresponding estimate. The top-right subplot similarly illustrates the evolution of velocity and its estimate over time. In both of these subplots, we also mark the instants of communication between the sensor and the controller of team 1, highlighting when state updates are transmitted over the network. Finally, while the bottom left subplot presents the estimation error at the controller (demonstrating that it remains bounded throughout the simulation), the bottom right subplot shows how the control input varies over time.

\begin{figure}[h]
	\centerline{\includegraphics[width=0.7\columnwidth]{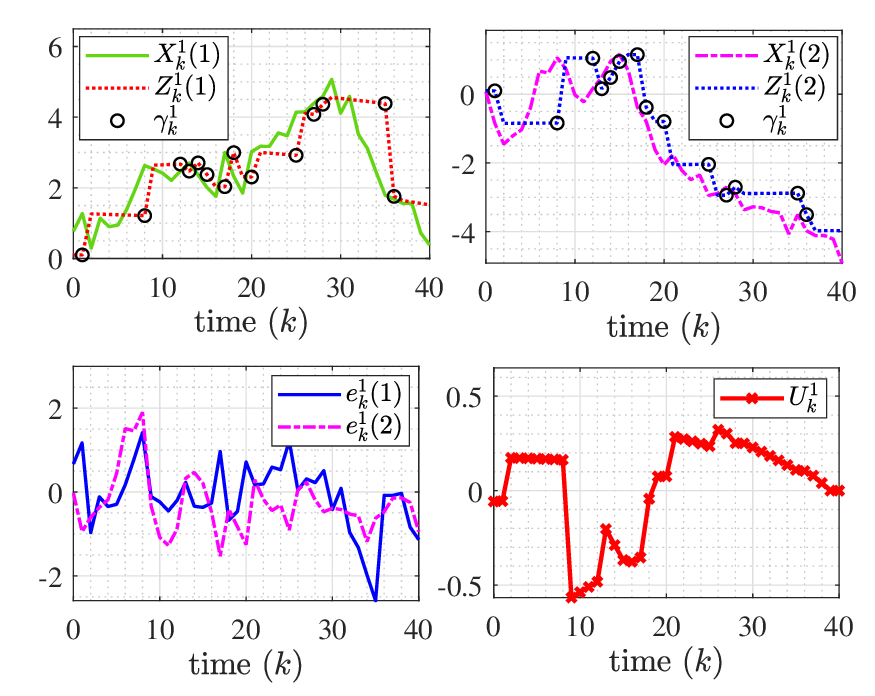}}
	\caption{\small{The plots show the variation of system states, state estimates, control input, communication instants and estimation errors over time for the team 1.}}
	\label{Fig:Sim_robots}
    \vspace{-0.4cm} 
\end{figure}

\textbf{2) Variation of number of communication instants versus $\mathbf{\lambda}$: }In our final study of this section, we analyze how the number of communication instants for a representative team (Team 1) varies with $\lambda$. To this end, we simulate a system with $N=10$ teams over a time horizon of $T=41$ and parameter $\alpha$ set to 0.07. Our results (from Table \ref{Table:comm_inst}) confirm that as $\lambda$ increases, the number of communication instants decreases, which aligns with intuition—higher communication costs encourage more sparse transmissions. Additionally, we examine the equilibrium channel utilization over time (in Fig. \ref{Fig:MF_traj_rob}) and observe a similar decreasing trend, demonstrating that increased communication costs lead to reduced channel occupancy. This finally concludes our numerical evaluation of our proposed mean-field team game approach.

\begin{table}[h]
\centering
\begin{tabular}{|c|c|c|}
\hline
$\lambda$ & \# comm. instants & \% comm. instants \\ \hline
0.3       & 26                & 65\%              \\ \hline
3         & 20                & 50\%              \\ \hline
14        & 15                & 37.5\%            \\ \hline
\end{tabular}
\caption{\small{Variation of number and percentage of communication instants for team 1 with cost coefficient $\lambda$ (for $T = 41$).}}
\end{table}
\label{Table:comm_inst}

\begin{figure}[h]
\centerline{\includegraphics[width=0.7\columnwidth]{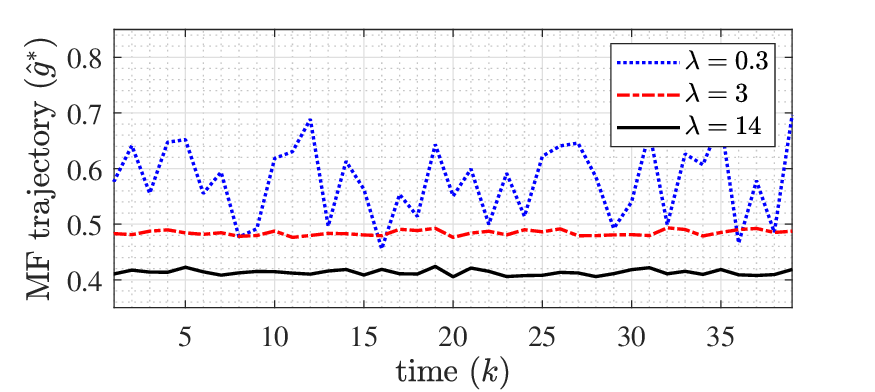}}
	\caption{\small{The plot shows the variation of the equilibrium MF trajectory over time.}}
	\label{Fig:MF_traj_rob}
    \vspace{-0.9cm} 
\end{figure}

\end{example}

\section{Conclusion \& Discussions}\label{sec:conclusion}
In this paper, we have solved for approximate Nash equilibrium team policies for a multi-team dynamic game problem where each team is coordinating its communication and control policies while utilizing a shared wireless channel. Specifically, we have formulated a noncooperative (general sum) game between $N$ teams, each of which was comprised of a local sensor and a local controller which communicate over a network and incur an overall cost which is coupled with the (sensor) policies of other teams as a result of communication over the shared channel. Due to unavailability of the sensor policies of other teams (which become challenging to obtain especially when there is a high population of them), we have employed the mean-field team game framework to compute \textit{approximate decentralized} Nash-equilibria between the teams. Toward this end, we have first noticed that the finite cardinality of the sensor action space prohibited the use of a contraction argument to establish the existence of a mean-field equilibrium. To overcome this difficulty, we have constructed a Boltzmann-type sensing policy which we have then employed to explicitly prove the existence of a unique approximate mean-field team equilibrium. Subsequently, we showed that the same constituted an approximate Nash equilibrium for the finite-team game, where the approximation depended vanishingly on the number of teams, $N$, and the Boltzmann parameter $\alpha$. We have also validated the theoretical results with extensive numerical simulations.

We list here a few future research directions. First, for the numerical analysis, we had assumed that the price parameter $\lambda$ was fixed and provided an extensive numerical analysis for its effect on the average costs, estimation errors, and control inputs of the teams. Thus one direction would be to formulate the finite-team game within a Stackelberg game setting, where the leader (possibly a telecommunication/network operator) solves an optimization problem to distribute `optimal' $\lambda$ to the follower teams. This necessitates substituting the policies of the followers into the leader's optimization problem, and subsequently deriving the leader's optimal policy. However, this (at the moment) seems to be quite challenging since each team's policy is not attainable in closed form; rather, we only know it as a function of the conditional value function. Thus, it would be interesting to see if one can derive analytical results for the Stackelberg game setting. 

Another direction that deserves mention here is that in the current work we did not account for the \textit{no-transmission} instants within the information structure of the controller. This was due to the fact that the no-transmission instants make the estimator dynamics nonlinear, and couples the design on the optimal estimator and the optimal sensing policy. Although it has been shown in \cite{soleymani2022value} that the optimal estimator turns out to be linear and decoupled from the sensor design, this is true only at optimality. However, upon careful observation, one may notice that in the current work, we have employed a Boltzmann sensing policy (which is an approximation to the optimal policy) to establish the existence of a unique and tractable equilibrium. Hence, the decoupling argument between the estimator and the scheduler no longer holds. Thus, it would be interesting to consider whether similar results as in this paper can be generalized to the case where the information set of the controller also includes the no-transmission instants. 

Finally, in the current work, we did not explicitly account for network-induced effects such as packet dropouts, channel fading, and stochastic delays, among others. Investigating how these factors would impact team performance and equilibrium strategies would be an interesting extension of our study.

\bibliographystyle{IEEEtran}
\bibliography{refs}

\section*{Appendix I}\label{App:existence}

\begin{proof}[Proof of Proposition \ref{prop:existence}]
    We prove the proposition by inductively establishing the continuity of the operator $\mathcal{T}$ and consequently employing Brouwer's fixed point theorem.

    To this end, let us define the following set
\begin{align} \label{eq:mathcal_G}
    \mathcal{G}:= \{\hat{g}:= \{\hat g_k\}_{k=0}^{T-1} \mid \|\hat{g}_k\| \leq 1,~\forall k \in [T]\}.
\end{align}
Then, we note from \eqref{MF_operator} that $\mathcal{T}: [0,1]^T \rightarrow [0,1]^T$, and hence, $\mathcal{T}(\mathcal{G}) \subseteq \mathcal{G}$. Further, the interval $[0,1]$ is compact and convex, and hence, $\mathcal{G}$ is also compact and convex. Thus, it only remains to prove the continuity of $\mathcal{T}$, which then by using Brouwer's fixed point theorem would prove the statement of the proposition. From now on, we will omit $\omega$ wherever it clutters the notation.

To establish continuity, using \eqref{MF_operator}, it suffices to show the continuity of $\operatorname{VoI}_k^{\omega}$ with respect to $\hat g$, for which we first show the continuity of the state-action value function, defined as:
\begin{align}
 \!\!{Q}^{*,x}_k(\calI^S_k,a_k) \!& = \EE[\lambda x_ka_k + e_{k+1}^\top \Gamma_k e_{k+1} + \min_{a \in \{0,1\}}{Q}^{*,x}_{k+1}(\calI^S_{k+1},a)],  
\end{align}
for all $k \in [T]$. The above denotes the value received by the generic team when the action $a_k$ is chosen at the current instant and the future actions are chosen optimally from the admissible space $\Xi^{dec}$ for a given MF trajectory $x$. We will use backward induction on $k$ to prove the result. Fix $\hat g \in \mathcal{G}$. Consider the base case when $k = T-1$. Then, by recalling that the terminal cost at instant $T$ is 0, we have that 
\begin{align}\label{test_Q1}
 \!\!{Q}^{*,\hat g}_{T-1}(\calI^S_{T-1},a_{T-1}) \!& = \EE[\lambda \hat g_{T-1} a_{T-1} + e_{T}^\top \Gamma_T e_{T}].
\end{align}

Upon using the dynamics \eqref{eq:est_error} in \eqref{test_Q1} for $k = T-1$, we arrive at the equation
\begin{align*}
  {Q}^{*,\hat g}_{T-1}&(\calI^S_{T-1},a_{T-1}) \!  = \lambda \hat g_{T-1} a_{T-1} + (1-a_{T-1})^2e_{T-1}^\top A^\top \Gamma_T Ae_{T-1} + \EE[W_{T-1}^\top \Gamma_T W_{T-1}],
\end{align*}
    which is linear (and thus, clearly continuous) in $\hat g$.

    Next, we take the induction hypothesis to be the assertion that ``${Q}^{*,\hat g}_{k+1}(\calI^S_{k+1},a_{k+1})$ is continuous in $\hat g$''. Then, we prove the assertion for instant $k$. Consider the following:
    \begin{small}
    \begin{align*}
 {Q}^{*,\hat g}_k(\calI^S_k,a_k) \!& = \EE[\lambda \hat g_ka_k + e_{k+1}^\top \Gamma_k e_{k+1} \! + \!\!\! \min_{a \in \{0,1\}}{Q}^{*,\hat g}_{k+1}(\calI^S_{k+1},a)].
\end{align*}
\end{small}
From the induction hypothesis, we have that $\min_{a \in \{0,1\}}{Q}^{*,\hat g}_{k+1}(\calI^S_{k+1},a)$ is continuous in $\hat{g}$ since the minimum of a continuous function is continuous. Further, the running cost of $\EE[\lambda \hat g_ka_k + e_{k+1}^\top \Gamma_k e_{k+1}]$ is also continuous in $\hat g$. The desired continuity of ${Q}^{*,\hat g}_k(\calI^S_k,a_k)$ thus follows from the fact that the sum of continuous functions is continuous. Consequently, the result is established for all $k \in [T]$ and any chosen action $a \in \{0,1\}$ by invoking the induction principle. The continuity of $\operatorname{VoI}_k^\omega$ then follows from by noting that 
\begin{align*}
    \operatorname{VoI}_k^\omega = {Q}^{*,\hat g,\omega}_k(\calI^S_k,0) - {Q}^{*,\hat g,\omega}_k(\calI^S_k,1)
\end{align*}
is the difference of two continuous functions. Finally, noting that $\mathcal{T}(x) \ne x, x = \{0\}^T,\{1\}^T$, the proof is complete.
\end{proof}

Next, to prove Theorem \ref{thm:uniqueness}, we first establish the following auxiliary lemma.

\begin{lemma}\label{lem:Lip_Q}
    Let $\hat g^1, \hat g^2 \in \mathcal{G}$ where $\mathcal{G}$ is defined in \eqref{eq:mathcal_G}. Then, the following is true for all $k \in [T]$:
    \begin{align*}
        {Q}^{*,\hat g^1}_{k}(\calI^S_{k},a_{k}) & - {Q}^{*,\hat g^2}_{k} (\calI^S_{k},a_{k}) \leq \lambda T \|\hat g^1 - \hat g^2\|.
    \end{align*}
\end{lemma}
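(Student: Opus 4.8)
The plan is to prove the Lipschitz bound by backward induction on $k$, exactly paralleling the continuity argument already used in the proof of Proposition \ref{prop:existence}. The key quantity to control is the state-action value function ${Q}^{*,\hat g}_{k}(\calI^S_{k},a_{k})$, and I would show that the difference between its values under two mean-field trajectories $\hat g^1$ and $\hat g^2$ accumulates at most a factor $\lambda$ per time step, yielding the stated factor $\lambda T$ over the horizon.

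First I would establish the base case at $k = T-1$. Using the closed-form expression derived in the existence proof,
\begin{align*}
  {Q}^{*,\hat g}_{T-1}(\calI^S_{T-1},a_{T-1}) = \lambda \hat g_{T-1} a_{T-1} + (1-a_{T-1})^2 e_{T-1}^\top A^\top \Gamma_T A e_{T-1} + \EE[W_{T-1}^\top \Gamma_T W_{T-1}],
\end{align*}
only the first term depends on $\hat g$, so the difference reduces to $\lambda a_{T-1}(\hat g^1_{T-1} - \hat g^2_{T-1})$. Since $a_{T-1} \in \{0,1\}$, this is bounded by $\lambda |\hat g^1_{T-1} - \hat g^2_{T-1}| \le \lambda \|\hat g^1 - \hat g^2\|$, which is already dominated by $\lambda T \|\hat g^1 - \hat g^2\|$.

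For the inductive step, I would take as hypothesis that ${Q}^{*,\hat g^1}_{k+1} - {Q}^{*,\hat g^2}_{k+1} \le \lambda T \|\hat g^1 - \hat g^2\|$ and examine the Bellman recursion
\begin{align*}
  {Q}^{*,\hat g}_k(\calI^S_k,a_k) = \EE\Big[\lambda \hat g_k a_k + e_{k+1}^\top \Gamma_k e_{k+1} + \min_{a \in \{0,1\}}{Q}^{*,\hat g}_{k+1}(\calI^S_{k+1},a)\Big].
\end{align*}
The explicit running term contributes $\lambda a_k (\hat g^1_k - \hat g^2_k)$, bounded by $\lambda \|\hat g^1 - \hat g^2\|$, while the quadratic error term $e_{k+1}^\top \Gamma_k e_{k+1}$ is independent of $\hat g$ and cancels (note the error dynamics \eqref{eq:est_error} depend on $\gamma_k$, not directly on $\hat g$). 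The remaining term requires bounding the difference of the two minima; here I would use the standard inequality $\min_a f(a) - \min_a h(a) \le \max_a (f(a) - h(a))$, so that the continuation term is controlled by the induction hypothesis bound $\lambda T \|\hat g^1 - \hat g^2\|$.

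The main obstacle — and the place where the stated bound looks slightly loose — is reconciling the per-step increment with the claimed uniform bound $\lambda T$. A naive accumulation would give roughly $\lambda (T-k)$ after unrolling, which is indeed dominated by $\lambda T$ for every $k$, so the inequality as stated (a one-sided bound rather than a tight recursion) is consistent; I would simply carry $\lambda T$ through as a uniform envelope rather than attempting to sharpen it step by step. Care is also needed because the sensor action determining $\gamma_k$ inside the expectation is itself $\hat g$-dependent through the Boltzmann policy, but since the bound is derived for the \emph{optimal} state-action value function (with the outer minimization over $a$ handling the action choice), the induction is on $Q^{*,\hat g}$ directly and the policy-dependence is subsumed in the $\min$ operator. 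Once the inductive step closes, the lemma follows for all $k \in [T]$ by the induction principle.
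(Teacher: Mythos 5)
Your overall strategy (backward induction, the explicit base case at $k=T-1$, the per-step increment $\lambda a_k(\hat g^1_k - \hat g^2_k)$, and the inequality $\min_a f(a) - \min_a h(a) \le \max_a\,(f(a)-h(a))$ for the continuation term) matches the paper's proof in substance; the paper verifies that min-inequality by an explicit case analysis on the two argmins $a_1^*, a_2^*$, but it is the same step. However, there is a genuine flaw in how you close the induction. You state the induction hypothesis as ${Q}^{*,\hat g^1}_{k+1} - {Q}^{*,\hat g^2}_{k+1} \le \lambda T\|\hat g^1 - \hat g^2\|$ and then declare you would ``simply carry $\lambda T$ through as a uniform envelope rather than attempting to sharpen it step by step.'' That induction does not close: the inductive step adds the running-cost increment $\lambda\|\hat g^1-\hat g^2\|$ on top of whatever the continuation term contributes, so from a $\lambda T$ hypothesis at time $k+1$ you can only conclude a bound of $\lambda(T+1)\|\hat g^1-\hat g^2\|$ at time $k$, which is strictly weaker than the claimed $\lambda T$ bound. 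A time-independent envelope can never absorb a strictly positive per-step increment, so the inductive step as you have planned it fails.

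The fix is exactly the sharpening you chose to avoid, and it is what the paper does: take the time-indexed hypothesis ${Q}^{*,\hat g^1}_{k+1} - {Q}^{*,\hat g^2}_{k+1} \le \lambda(T-k-1)\|\hat g^1 - \hat g^2\|$, conclude at time $k$ that the difference is at most $\lambda a_k(\hat g^1_k - \hat g^2_k) + \lambda(T-k-1)\|\hat g^1 - \hat g^2\| \le \lambda(T-k)\|\hat g^1 - \hat g^2\|$, and only at the very end relax $\lambda(T-k) \le \lambda T$ to obtain the statement of the lemma. (Equivalently, you could unroll the recursion completely and sum the at most $T-k$ per-step increments of size $\lambda\|\hat g^1 - \hat g^2\|$; your own observation that ``naive accumulation would give roughly $\lambda(T-k)$'' is precisely this correct argument --- you just cannot package it as an induction on the fixed constant $\lambda T$.) With that one change, your argument becomes the paper's proof.
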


\begin{proof}
    We will use backward induction on $k$ to prove the result. In this regard, first, consider the base case for $k = T-1$. Then, we have that:
    \begin{small}
\begin{align*}
    & {Q}^{*,\hat g^1}_{T-1}(\calI^S_{T-1},a_{T-1}) - {Q}^{*,\hat g^2}_{T-1}(\calI^S_{T-1},a_{T-1}) \\
 &  = \lambda \hat g^1_{T-1} a_{T-1} + (1-a_{T-1})^2e_{T-1}^\top A^\top \Gamma_T Ae_{T-1} + \EE[W_{T-1}^\top \Gamma_T W_{T-1}] - \lambda \hat g^2_{T-1} a_{T-1} \\
 & ~~~~ - (1-a_{T-1})^2e_{T-1}^\top A^\top \Gamma_T Ae_{T-1} - \EE[\tilde W_{T-1}^\top \Gamma_T \tilde W_{T-1}] \\
 & = \lambda a_{T-1} (\hat g^1_{T-1} - \hat g^2_{T-1}) \leq \lambda \|\hat g^1 - \hat g^2\|.
\end{align*}
\end{small}
The base case is thus true. Assume next that

\begin{align}\label{eq:ind_hyp}
    {Q}^{*,\hat g^1}_{k+1}(\calI^S_{k+1},a_{k+1}) & - {Q}^{*,\hat g^2}_{k+1}(\calI^S_{k+1},a_{k+1}) \leq \lambda (T-k-1) \|\hat g^1 - \hat g^2\|.
\end{align}
Then, consider the following:
\begin{align}\label{test_Q2}
    & {Q}^{*,\hat g^1}_{k}(\calI^S_{k},a_{k}) - {Q}^{*,\hat g^2}_{k} (\calI^S_{k},a_{k}) \nonumber \\
    & = \EE[\lambda \hat g^1_ka_k + e_{k+1}^\top \Gamma_k e_{k+1} + \min_{a \in \{0,1\}}{Q}^{*,\hat g^1}_{k+1}(\calI^S_{k+1},a)] \nonumber \\
    & - \EE[\lambda \hat g^2_ka_k + \tilde e_{k+1}^\top \Gamma_k \tilde e_{k+1} + \min_{a \in \{0,1\}}{Q}^{*,\hat g^2}_{k+1}(\calI^S_{k+1},a)] \nonumber \\
    &  = \lambda a_{k} (\hat g^1_{k} - \hat g^2_{k}) + \EE [ \min_{a \in \{0,1\}}{Q}^{*,\hat g^1}_{k+1} (\calI^S_{k+1},a) \! -\! \min_{a \in \{0,1\}}{Q}^{*,\hat g^2}_{k+1} (\calI^S_{k+1},a)],
\end{align}
where the errors $e_{k+1}$ and $\tilde{e}_{k+1}$ are under noise realizations $W_k$ and $\tilde W_k$, respectively.
Next, let us define $a_1^* := \text{argmin}_{a}~{Q}^{*,\hat g^1}_{k+1} (\calI^S_{k+1},a)$ and $a_2^* := \text{argmin}_{a}~{Q}^{*,\hat g^2}_{k+1} (\calI^S_{k+1},a)$. Then, we consider the following cases:

\textbf{1)} If $a_1^* = a_2^* = 1$ or $a_1^* = a_2^* = 0$, using \eqref{eq:ind_hyp}, we have that
\begin{small}
    \begin{align*}
        {Q}^{*,\hat g^1}_{k+1} (\calI^S_{k+1},a_1^*) & - {Q}^{*,\hat g^2}_{k+1} (\calI^S_{k+1},a_2^*) \leq \lambda (T-k-1) \|\hat g^1 - \hat g^2\|.
    \end{align*}
    \end{small}

    \textbf{2)} If $a_1^* = 1$ and $a_2^* = 0$, we have that
    \begin{small}
    \begin{align*}
        & {Q}^{*,\hat g^1}_{k+1} (\calI^S_{k+1},1) - {Q}^{*,\hat g^2}_{k+1} (\calI^S_{k+1},0) \\
        & \hspace{1.5cm} \leq {Q}^{*,\hat g^1}_{k+1} (\calI^S_{k+1},0) - {Q}^{*,\hat g^2}_{k+1} (\calI^S_{k+1},0) \\
        & \hspace{1.5cm} \leq \lambda (T-k-1) \|\hat g^1 - \hat g^2\|,
    \end{align*}
    \end{small}
    where the last inequality follows by \eqref{eq:ind_hyp}.

\textbf{3)} If $a_1^* = 0$ and $a_2^* = 1$, we have that
    \begin{align*}
        & {Q}^{*,\hat g^1}_{k+1} (\calI^S_{k+1},0) - {Q}^{*,\hat g^2}_{k+1} (\calI^S_{k+1},1) \\
        & \hspace{1.5cm} \leq {Q}^{*,\hat g^1}_{k+1} (\calI^S_{k+1},1) - {Q}^{*,\hat g^2}_{k+1} (\calI^S_{k+1},1) \\
        & \hspace{1.5cm} \leq \lambda (T-k-1) \|\hat g^1 - \hat g^2\|,
    \end{align*}
    where the last inequality follows by \eqref{eq:ind_hyp}.

Thus, continuing from \eqref{test_Q2}, we have that
\begin{align}
    {Q}^{*,\hat g^1}_{k}(\calI^S_{k},a_{k}) & - {Q}^{*,\hat g^2}_{k} (\calI^S_{k},a_{k}) \nonumber \\
    & \leq \lambda a_k (\hat g^1_k - \hat g^2_k) + \lambda (T-k-1) \|\hat g^1 - \hat g^2\| \nonumber \\
    & \leq \lambda (T-k) \|\hat g^1 - \hat g^2\| \leq \lambda T \|\hat g^1 - \hat g^2\|.
\end{align}
The proof of the lemma is thus completed by invoking the induction principle.
\end{proof}
We are now ready to prove Theorem \ref{thm:uniqueness}.
\begin{proof}[Proof of Theorem \ref{thm:uniqueness}]
    We will use Banach's fixed point theorem to prove the result. In this regard, we start by noting from \eqref{MF_operator} that $\mathcal{T}: [0,1]^T \rightarrow [0,1]^T$, and hence, $\mathcal{T}(\mathcal{G}) \subseteq \mathcal{G}$. Further, the interval $[0,1]$ is a closed subset of a complete metric space $\mathbb{R}$, and hence complete, which further implies that $[0,1]^T$ is complete under the sup norm. Finally, we prove that $\mathcal{T}$ is a contraction.

    Let $\hat g^1, \hat g^2 \in \mathcal{G}$ and further define $\operatorname{VoI}^{\omega,1}_k$ and $\operatorname{VoI}^{\omega,2}_k$ as the value of information terms at instant $k$ corresponding to $\hat g^1$ and $\hat g^2$, respectively. Then, consider the following:
    \begin{small}
\begin{align}\label{eq:cxtrction}
    & [\mathcal{T}(\hat{g}^1) - \mathcal{T}(\hat{g}^2)]_k  = \mathbb{E} \left[ \frac{1}{1+ e^{-\alpha \operatorname{VoI}^{\omega,1}_k}} \!-\! \frac{1}{1\!+\! e^{-\alpha \operatorname{VoI}^{\omega,2}_k}}\right] \nonumber \\
    & = \mathbb{E} \left[ \frac{e^{-\alpha \operatorname{VoI}^{\omega,2}_k} - e^{-\alpha \operatorname{VoI}^{\omega,1}_k}}{(1+ e^{-\alpha \operatorname{VoI}^{\omega,2}_k}) (1+ e^{-\alpha \operatorname{VoI}^{\omega,1}_k})} \right] \nonumber \\
    & = \mathbb{E} \left[ \frac{e^{-\alpha \operatorname{VoI}^{\omega,2}_k}(1 - e^{-\alpha \operatorname{VoI}^{\omega,1}_k + \alpha \operatorname{VoI}^{\omega,2}_k })}{(1+ e^{-\alpha \operatorname{VoI}^{\omega,2}_k}) (1+ e^{-\alpha \operatorname{VoI}^{\omega,1}_k})} \right] \nonumber \\
    & \leq  \mathbb{E} \!\left[ \frac{\alpha e^{-\alpha \operatorname{VoI}^{\omega,2}_k}}{(1\!+ \!e^{-\alpha \operatorname{VoI}^{\omega,2}_k}) (1\!+ \!e^{-\alpha \operatorname{VoI}^{\omega,1}_k})} (\operatorname{VoI}^{\omega,1}_k \!-\! \operatorname{VoI}^{\omega,2}_k) \right]. 
\end{align}
\end{small}
Next, we prove that $\operatorname{VoI}^{\omega}$ is Lipschitz continuous in $\hat g$. To this end, we invoke the definition of the value of information to arrive at:
\begin{align}\label{test_Q3}
    \operatorname{VoI}^{\omega,1}_k - \operatorname{VoI}^{\omega,2}_k & = {Q}^{*,\hat g^1,\omega}_k(\calI^{S,\omega}_k,0) - {Q}^{*,\hat g^1,\omega}_k(\calI^{S,\omega}_k,1) \nonumber \\
    & - {Q}^{*,\hat g^2,\omega}_k(\calI^{S,\omega}_k,0) +  {Q}^{*,\hat g^2,\omega}_k(\calI^{S,\omega}_k,1) \nonumber \\
    & = {Q}^{*,\hat g^1,\omega}_k(\calI^{S,\omega}_k,0) -  {Q}^{*,\hat g^2,\omega}_k(\calI^{S,\omega}_k,0) \nonumber \\
    & + {Q}^{*,\hat g^2,\omega}_k(\calI^{S,\omega}_k,1) -  {Q}^{*,\hat g^2,\omega}_k(\calI^{S,\omega}_k,1) \nonumber \\
    & \leq 2 \lambda (T-k) \|\hat g^1 - \hat g^2\|,
\end{align}
where the last inequality follows by using Lemma \ref{lem:Lip_Q}. Then, by substituting \eqref{test_Q3} in \eqref{eq:cxtrction}, we arrive at $    [\mathcal{T}(\hat{g}^1) - \mathcal{T}(\hat{g}^2)]_k \leq L_k\|\hat g^1 - \hat g^2\|$ where
\begin{align*}
    L_k:= 2 \alpha \lambda (T-k) \mathbb{E} \left[ \frac{e^{-\alpha \operatorname{VoI}^{\omega,2}_k}}{(1+ e^{-\alpha \operatorname{VoI}^{\omega,2}_k}) (1+ e^{-\alpha \operatorname{VoI}^{\omega,1}_k})} \right].
\end{align*}
 A similar inequality can be proven for $[\mathcal{T}(\hat{g}^2) - \mathcal{T}(\hat{g}^1)]_k$, which would then imply the above inequality for $|[\mathcal{T}(\hat{g}^1) - \mathcal{T}(\hat{g}^2)]|_k$, as well. Thus, the operator $\mathcal{T}$ is a contraction if $\max_{k \in [T]}L_k <1$, which is implied by the sufficient condition \eqref{contraction_condition}. The proof is then completed by invoking Banach's fixed point theorem to conclude that there exists a unique $\hat g^*$ which is the fixed point of the operator $\mathcal{T}$ such that $\hat g^* = \mathcal{T}(\hat g^*)$.
\end{proof}

\section*{Appendix II}
\label{App:EPS_NAsh}
\begin{lemma}\label{eq:First_term_bound}
    It holds that
    \begin{align*}
        J^N_i(\pi^{*,i}, \hat{\zeta}^{*,i},\hat{\zeta}^{*,-i}) - J^\omega(\pi^{*},\hat{\zeta},\hat{g}^*) \le \mathcal{O}\Big(\frac{1}{\sqrt{N}} + \epsilon_{P,N} \Big).
    \end{align*}
\end{lemma}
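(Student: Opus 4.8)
The plan is to split the gap between the finite-team cost in \eqref{costa} and the mean-field cost in \eqref{costb} into two pieces: the quadratic state-and-control part and the coupled communication part. The central structural observation is that when every team (including team $i$) plays the equilibrium policy $(\pi^{*},\hat\zeta^{*})$ against the fixed equilibrium trajectory $\hat g^{*}$, team $i$'s own triple $(\hat X^i_k, U^i_k, \hat\gamma^i_k)$ is generated through \eqref{System_dynamics}, the optimal controller \eqref{eq:opt_controller}, the estimator of Proposition \ref{prop:MMSE}, and the Boltzmann rule \eqref{approx_scheduling_policy}, none of which depend on the actions of the other teams; the inter-team coupling enters \emph{only} through the communication cost via $\hat\gamma^{N,av}_k$. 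Hence, conditioned on the type $\omega_i = \omega$, the law of $(\hat X^i_{0:T-1}, U^i_{0:T-1}, \hat\gamma^i_{0:T-1})$ coincides with that of the representative team's $(\hat X_{0:T-1}, U_{0:T-1}, \hat\gamma_{0:T-1})$, since both are driven by i.i.d. Gaussian noise from the same initial law under identical dynamics and policies.

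First I would dispense with the quadratic terms. Writing $c^{LQ}(\omega):= \frac{1}{T}\mathbb E\big[\sum_{k} \|\hat X_k\|^2_{Q(\omega)} + \|U_k\|^2_{R(\omega)} \mid \omega\big]$, the distributional equivalence above shows that the state-control contribution to \eqref{costa} equals $\sum_{\omega} \mathbb P_N(\omega)\, c^{LQ}(\omega)$, whereas the corresponding contribution to \eqref{costb} equals $\sum_{\omega}\mathbb P(\omega)\, c^{LQ}(\omega)$. Their difference is $\sum_\omega(\mathbb P_N(\omega)-\mathbb P(\omega))\,c^{LQ}(\omega)$, which is at most $(\max_\omega c^{LQ}(\omega))\sum_\omega |\mathbb P_N(\omega)-\mathbb P(\omega)| = \mathcal O(\epsilon_{P,N})$. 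The uniform boundedness of $c^{LQ}(\omega)$ over the finite set $\Omega$ and the finite horizon follows from the finiteness of the second moments of $\hat X_k$ and $U_k$, which I would record separately along the lines of Lemma \ref{lem:Q_bounded}.

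For the communication part, the remaining gap is $\frac{\lambda}{T}\mathbb E\big[\sum_k \hat\gamma^{N,av}_k \hat\gamma^i_k - \hat g^*_k\hat\gamma_k\big]$. I would add and subtract $\hat g^*_k \hat\gamma^i_k$: the term $\frac{\lambda}{T}\mathbb E\big[\sum_k \hat g^*_k(\hat\gamma^i_k - \hat\gamma_k)\big]$ is again governed by the type-distribution mismatch, contributing $\mathcal O(\epsilon_{P,N})$ by the same matching argument, while the term $\frac{\lambda}{T}\mathbb E\big[\sum_k(\hat\gamma^{N,av}_k - \hat g^*_k)\hat\gamma^i_k\big]$ is bounded in absolute value, using $\hat\gamma^i_k \in \{0,1\}$, by $\frac{\lambda}{T}\sum_k\mathbb E\big[|\hat\gamma^{N,av}_k - \hat g^*_k|\big]$. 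Applying Lemma \ref{lem:Approx_MFE} with the deviating policy $\xi^i$ specialized to the equilibrium policy $\hat\zeta^{*}$ (so that its Term 1 vanishes identically) bounds this by $\mathcal O(\tfrac{1}{\sqrt N} + \epsilon_{P,N})$. Summing the two pieces yields the claimed $\mathcal O(\tfrac{1}{\sqrt N}+\epsilon_{P,N})$ bound.

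I expect the main obstacle to be making the distributional-equivalence (decoupling) argument airtight: one must verify that team $i$'s estimator reset, controller output, and randomized Boltzmann action are all measurable functions of only team $i$'s own history and the \emph{deterministic} $\hat g^{*}$, so that the cross-team independence of the noises $\{W^j\}_j$ genuinely decouples team $i$'s quadratic trajectory from $\hat\gamma^{N,av}_k$ and forces the per-type laws to coincide. The accounting of the type distribution ($\mathbb P_N$ versus $\mathbb P$), which is the only source of discrepancy in the LQ terms and in the $\hat g^{*}\hat\gamma$ cross term, must be tracked consistently across both pieces so that all residual contributions collapse into a single $\mathcal O(\epsilon_{P,N})$.
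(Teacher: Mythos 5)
Your proposal is correct and follows essentially the same route as the paper's proof: the key steps in both are adding and subtracting $\lambda \hat g^*_k \hat\gamma^i_k$ in the finite-team cost and then invoking Lemma \ref{lem:Approx_MFE} (in your case with the deviating policy specialized to $\hat\zeta^*$, so its Term 1 vanishes) to bound the residual coupling term $\frac{\lambda}{T}\mathbb{E}\big[\sum_{k}(\hat\gamma^{N,av}_k - \hat g^*_k)\hat\gamma^i_k\big]$ by $\mathcal{O}\big(\tfrac{1}{\sqrt N}+\epsilon_{P,N}\big)$. The only difference is one of bookkeeping: you make explicit the per-type distributional equivalence and the $\mathbb{P}_N$-versus-$\mathbb{P}$ accounting for the quadratic and $\hat g^*_k\hat\gamma^i_k$ terms (contributing additional $\mathcal{O}(\epsilon_{P,N})$ pieces), whereas the paper compresses all of this into the single claimed equality identifying those terms with $J(\pi^{*},\hat\zeta^*,\hat g^{*})$.
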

\begin{proof}
Consider the following:
\begin{small}
\begin{align*}
    & J^N_i(\pi^{*,i},\hat{\zeta}^{*,i},\hat{\zeta}^{*,-i}) \nonumber \\
    & = \! \frac{1}{T} \mathbb{E}\!\left[ \!\sum_{k=0}^{T-1} \! \|\hat{X}^i_k\|^2_{Q(\omega_i)} \!+\! \|U^i_k\|^2_{R(\omega_i)} \!+\! \lambda \hat{\gamma}^{N,av}_k \hat{\gamma}^i_k \! - \! \lambda \hat g^*_k \hat{\gamma}^i_k \!+\! \lambda \hat{g}^{*}_k \hat{\gamma}^i_k\right] \\
    & = J^\omega(\pi^{*},\hat{\zeta}^*,\hat{g}^{*}) + \frac{\lambda}{T}\mathbb{E}\left[  \sum_{k=0}^{T-1}(\hat{\gamma}^{N,av}_k - \hat{g}^{*}_k)\hat{\gamma}^i_k \right] \\
    & \leq J^\omega(\pi^{*},\hat{\zeta},\hat{g}^{*}) + \mathcal{O}\Big(\frac{1}{\sqrt{N}} + \epsilon_{P,N} \Big),
\end{align*}
\end{small}
\hspace{-1.8mm}where the equality follows since we use the approximate MF policy pair ($\pi^*,\hat{\zeta}^*$) on the finite-agent system and the inequality follows as a result of Lemma \ref{lem:Approx_MFE}. The proof is thus complete.
\end{proof}

\begin{lemma}\label{eq:Third_term_bound}
    The following inequality holds:
    \begin{align}\label{Bound3}
        & \inf_{\zeta^d \in \Xi^{dec}} J^\omega(\pi^{*},{\zeta^d},\hat{g}^*) - \inf_{\xi^{i} \in \Xi^{cen,i}} J_i^N(\pi^{*,i},\xi^{i}, \hat{\zeta}^{*,-i}) \le \mathcal{O}\Big( \frac{1}{\sqrt{N}} + \epsilon_{P,N}\Big).
    \end{align}
\end{lemma}
    \begin{proof}
        Let us fix a policy $\xi^{i} \in \Xi^{cen,i}$ and $\tilde \gamma^i_k \sim \xi^i(\calI^{S_i}_k)$ while the other agents play using the policy $\hat \zeta^*$. Then, we have that:
\begin{align*}
    & J_i^N(\pi^{*,i},\xi^i, \hat{\zeta}^{*,-i}) \\
    & = \frac{1}{T} \mathbb{E}\left[ \sum_{k=0}^{T-1} \|{X}^i_k\|^2_{Q(\omega_i)} + \|U^i_k\|^2_{R(\omega_i)} + \lambda \tilde \gamma^{N,av}_k \tilde \gamma^i_k +\lambda \hat g^*_k\tilde \gamma^i_k - \lambda \hat g^*_k\tilde \gamma^i_k\right] \\
    & = \frac{1}{T} \mathbb{E}\left[ \sum_{k=0}^{T-1} \|{X}^i_k\|^2_{Q(\omega_i)} + \|U^i_k\|^2_{R(\omega_i)} + \lambda \hat g^*_k \gamma^i_k + \lambda  \tilde \gamma^{N,av}_k \tilde \gamma^i_k - \lambda \hat g^*_k\tilde \gamma^i_k\right] \\
    & \geq \inf_{\zeta^d \in \Xi^{dec}} J^\omega(\pi^{*},{\zeta^d},\hat g^*) + \frac{1}{T} \mathbb{E}\left[ \sum_{k=0}^{T-1}\lambda\tilde \gamma^i_k (  \tilde \gamma^{N,av}_k - \hat g^*_k) \right]
\end{align*}
which gives
\begin{align*}
    \inf_{\zeta^d \in \Xi^{dec}} & J^\omega(\pi^*,{\zeta}^d,\gamma^{av,*})  - J_i^N(\pi^{*,i},\xi^{i}, \hat{\zeta}^{*,-i}) \nonumber \\
    & \leq \frac{\lambda}{T}\mathbb{E}\left[  \sum_{k=0}^{T-1}(\hat g^*_k - \tilde \gamma^{N,av}_k)\tilde \gamma^i_k \right] \le \mathcal{O}\Big( \frac{1}{\sqrt{N}} + \epsilon_{P,N}\Big)
\end{align*}
which by taking infimum on both sides leads to \eqref{Bound3}.
The proof is thus complete.
    \end{proof}

\section*{Appendix III}
\label{App:eps:Term2}
\begin{lemma}\label{lem:Q_bounded}
The following inequality holds:
    \begin{align}
        \hat Q^{\hat g^*}_{k}(\calI^S_{k},\hat \gamma_{k}) \leq f_{k}(\|e_k\|^2) + c_k,~~~k \in [T],
    \end{align}
    where $\hat{Q}^x_k(\calI^S_k,a_k)$ denotes the state-action value function and is defined in a recursive manner as:
\begin{align}
    \!\!\hat{Q}^x_k(\calI^S_k,a_k) \!& = \!\EE[\lambda x_ka_k \!+\! e_{k+1}^\top \Gamma_k e_{k+1} \!\!+\! \hat{Q}^x_{k+1}(\calI^S_{k+1},a_{k+1})],
\end{align}
with $a_k\sim \hat \zeta^*_k(\calI^S_k,x)$ and $a_{k+1} \sim \hat \zeta^*_{k+1}(\calI^S_{k+1},x)$.
Further, $f_k(\cdot)$ are time-dependent functions, and $c_k>0$ is a constant.
\end{lemma}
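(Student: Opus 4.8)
The plan is to establish the bound by backward induction on $k$, taking each $f_k$ to be linear in its argument, $f_k(s) = \beta_k s$ for suitable nonnegative constants $\beta_k$, and carrying the additive constant separately in $c_k$. The point of this ansatz is that the class of ``linear-in-$\|e_k\|^2$ plus constant'' bounds is preserved by the one-step recursion defining $\hat Q^{\hat g^*}_k$. Two structural facts drive the argument: first, the communication term $\lambda \hat g^*_k \hat\gamma_k$ is bounded by $\lambda$ uniformly, since $\hat g^*_k \in [0,1]$ and $\hat\gamma_k \in \{0,1\}$; second, under the error dynamics \eqref{eq:est_error}, the conditional second moment of $e_{k+1}$ given $\calI^S_k$ is itself linear in $\|e_k\|^2$ up to an additive constant, so linear bounds propagate to linear bounds.

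For the base case at $k = T-1$ (where the continuation term vanishes), I would substitute $e_T = (1-\hat\gamma_{T-1})A(\omega)e_{T-1} + W_{T-1}$ directly into the definition, take the conditional expectation given $\calI^S_{T-1}$, and use that $W_{T-1}$ is zero-mean and independent of $e_{T-1}$ to eliminate the cross term. This produces $\EE[e_T^\top \Gamma_{T-1} e_T \mid \calI^S_{T-1}] = (1-\hat\gamma_{T-1})^2 e_{T-1}^\top A^\top \Gamma_{T-1} A e_{T-1} + \tr(\Gamma_{T-1} K_W)$, exactly the covariance recursion \eqref{eq:error_cov}. Bounding $(1-\hat\gamma_{T-1})^2 \le 1$, the quadratic form by $\|A^\top \Gamma_{T-1} A\|\,\|e_{T-1}\|^2$ (spectral/operator norm), and the communication term by $\lambda$, gives the claimed form with $\beta_{T-1} = \|A^\top \Gamma_{T-1} A\|$ and $c_{T-1} = \lambda + \tr(\Gamma_{T-1}K_W)$.

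For the inductive step, assuming $\hat Q^{\hat g^*}_{k+1}(\calI^S_{k+1},\hat\gamma_{k+1}) \le \beta_{k+1}\|e_{k+1}\|^2 + c_{k+1}$, I would take the conditional expectation of the defining recursion given $\calI^S_k$ and treat the three resulting terms separately. The communication term contributes at most $\lambda$. The stage term $\EE[e_{k+1}^\top \Gamma_k e_{k+1} \mid \calI^S_k]$ is handled exactly as in the base case, yielding a linear-in-$\|e_k\|^2$ bound plus the constant $\tr(\Gamma_k K_W)$. For the continuation term, the crucial observation is that $\EE[\|e_{k+1}\|^2 \mid \calI^S_k] = \EE[(1-\hat\gamma_k)^2 \mid \calI^S_k]\,\|A(\omega) e_k\|^2 + \tr(K_W) \le \|A(\omega)\|^2 \|e_k\|^2 + \tr(K_W)$, where the bound $\EE[(1-\hat\gamma_k)^2 \mid \calI^S_k] \le 1$ lets me dispense entirely with the explicit randomized Boltzmann form of $\hat\zeta^*$. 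Collecting the three bounds closes the induction with, e.g., $\beta_k = \|A^\top\Gamma_k A\| + \beta_{k+1}\|A(\omega)\|^2$ and $c_k = \lambda + \tr(\Gamma_k K_W) + \beta_{k+1}\tr(K_W) + c_{k+1}$.

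The main obstacle I anticipate is the bookkeeping around the randomized action $\hat\gamma_k$: since the Boltzmann policy $\hat\zeta^*$ depends on $\operatorname{VoI}_k^\omega$ and hence on $e_k$, the action and the state are correlated, so one must condition on $\calI^S_k$ before taking any expectation over the policy and the noise. The simplification that sidesteps a delicate analysis of the policy is that $(1-\hat\gamma_k)^2 \in \{0,1\}$ is bounded by $1$ for \emph{every} realized action, so the error-propagation bounds hold uniformly over $\hat\zeta^*$ and the recursion for $(\beta_k,c_k)$ is policy-independent. The only remaining care is verifying that the cross terms between $W_k$ and $(1-\hat\gamma_k)A(\omega)e_k$ vanish in conditional expectation, which follows from the independence and zero-mean property of the noise.
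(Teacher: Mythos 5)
Your proof is correct and follows essentially the same route as the paper's: backward induction on $k$, bounding the communication term by $\lambda$ (since $\hat g^*_k \in [0,1]$ and $\hat\gamma_k \in \{0,1\}$), propagating the quadratic error term through the dynamics \eqref{eq:est_error} with $(1-\hat\gamma_k)^2 \le 1$, and absorbing the continuation term via the inductive hypothesis. The only difference is cosmetic: the paper leaves $f_k$ and $c_k$ implicitly defined by the recursion, whereas you instantiate them as explicit linear functions with concrete constants $\beta_k$ and $c_k$, which is, if anything, a sharper bookkeeping of the same argument.
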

We note that the same inequality can also be established for $Q^{*,x}(\cdot, \cdot)$ using the same proof technique as presented below.

\begin{proof}
    We prove the above lemma by induction on $k$. In this regard, consider the base case with $k=T-1$. Then, we have
    \begin{align}
        & \hat Q^{\hat g^*}_{T-1}(\calI^S_{T-1},\hat \gamma_{T-1})  = \EE[\lambda \hat g^*_{T-1} \hat \gamma_{T-1} + e_T^\top \Gamma_T e_T] \nonumber \\
        & \leq \lambda \! + \!\!\sup_{0 \leq t \leq T}\|\Gamma_t\| \EE [e_T^\top e_T] \leq \lambda \!+\! \!\sup_{0 \leq t \leq T}\|\Gamma_t\| (\|e_{T-1}\|^2 \!+ \!K_W) \nonumber \\
        & =: f_{T-1}(\|e_{T-1}\|^2) + c_{T-1},
    \end{align}
    where we have used the error dynamics in \eqref{eq:est_error} to arrive at the last inequality. The base case is thus proven. Assume now that the inequality in the statement of the lemma holds for instant $k+1$, i.e., 
    \begin{align}\label{test_eqn_k}
        \hat Q^{\hat g^*}_{k+1}(\calI^S_{k+1},\hat \gamma_{k+1}) \leq f_{k+1}(\|e_{k+1}\|^2) + c_{k+1}.
    \end{align}

    Then, we prove \eqref{test_eqn_k} for instant $k$. Consider the following:
    \begin{small}
    \begin{align}
         & \hat Q^{\hat g^*}_{k}(\calI^S_{k},\hat \gamma_{k}) = \EE[\lambda \hat g^*_{k} \hat \gamma_{k} + e_{k+1}^\top \Gamma_{k+1} e_{k+1} + \hat{Q}^{\hat g^*}_{k+1}(\calI^S_{k+1},\hat \zeta^*)] \nonumber \\
         & \leq \lambda + \sup_{0 \leq t \leq T}\|\Gamma_t\| (\|e_{k}\|^2 + K_W)  + \EE[f_{k+1}(\|e_{k+1}\|^2) + c_{k+1}] \nonumber \\
         & \leq \lambda + \sup_{0 \leq t \leq T}\|\Gamma_t\| (\|e_{k}\|^2 + K_W) + \EE[ f_{k+1}(\|(1-\hat \gamma_k) e_{k} + W_k\|^2) + c_{k+1}] \nonumber \\
         & =: f_k(\|e_k\|^2) + c_k.
    \end{align}
\end{small}
The proof now follows by the induction principle, and is thus complete.    
\end{proof}

\begin{proposition}
    We have that
    \begin{align}
    \hat{Q}^{\hat g^*}_k(\calI^S_k,\hat \gamma_k) - {Q}^{*,\hat g^*}(\calI^S_k,\hat \gamma_k) \leq \epsilon'(\alpha).
\end{align}
 for all $k \in [T]$, where $\epsilon'(\alpha):= \mathcal{O} \Big( \frac{1}{1+ \min_{t \in [T_k]} \{e^{\alpha |\operatorname{VoI}_t|}\}}\Big)$ and $[T_k] \subseteq \{k, k+1, \cdots,t, \cdots, T-1\}$ is a subset of all those instants $t$ such that $\operatorname{VoI}_t \ne 0$.
\end{proposition}

\begin{proof}
Let us fix $\hat g^* \in [0,1]^T$. We will use induction on $k$ to prove the result.

 Let us start with the base case of $k=T-1$. Let us define the estimation errors
\begin{align*}
    \tilde e_{k+1} &= (1-\hat \gamma_k) A\hat e_k + \tilde W_k \\
    \bar e_{k+1} &= (1-\hat \gamma_k) A\hat e_k + \bar W_k,
\end{align*}
where $\hat e_k$ is the error at instant k under action $\hat \gamma_{k-1}$.
 
 We recall that $V_{T}(\mathcal{I}_T^S,\hat g^*) = 0$. Then, we have that
 \begin{align}
     & \hat{Q}^{\hat g^*}_{T-1}(\calI^S_{T-1},\hat \gamma_{T-1}) - {Q}^{*,\hat g^*}_{T-1}(\calI^S_{T-1},\hat \gamma_{T-1}) \nonumber \\
     & = tr(\Gamma_T(\EE[\tilde e_T^\top \tilde e_T] -\EE[\bar e_T^\top \bar {e}_T])) \nonumber \\
     & = tr(\Gamma_T ((1-\hat \gamma_{T-1} )A \hat e_{T-1} \hat e_{T-1}^\top A^\top) + K_W) - tr(\Gamma_T ((1-\hat \gamma_{T-1} )A \hat e_{T-1} \hat e_{T-1}^\top A^\top) + K_W) \nonumber \\
     & = 0 < \epsilon'(\alpha).
 \end{align}

The base case is thus proved. Assume now that the following is true:
\begin{align*}
    \hat{Q}^{\hat g^*}_{k+1}(\calI^S_{k+1},\hat \gamma_{k+1}) & - {Q}^{*,\hat g^*}_{k+1}(\calI^S_{k+1},\hat \gamma_{k+1}) \leq \epsilon_1(\alpha)/2 := \mathcal{O} \Big( \frac{1}{1+ \min_{t \in [T_{k+1}]} \{e^{\alpha |\operatorname{VoI}_t|}\}}\Big).
\end{align*}
Then, we prove the inequality in the statement of the Proposition for instant $k$. In this regard, let us define the set $\mathcal{A}:=\{0,1\}$ and the set $\mathcal{A}^{x}_{\text{opt}}(\calI^S)$ to be the set of optimal actions for a given trajectory $x$ and the information set $\calI^S$. Further let us denote a suboptimal action as $a_{\text{sub}} \in \mathcal{A} \setminus \mathcal{A}^s_{\text{opt}}(\calI^S)$.
Then, we have that
\begin{align*}
    & \hat{Q}^{\hat g^*}_{k}(\calI^S_{k},\hat \gamma_{k}) - {Q}^{*,\hat g^*}_{k}(\calI^S_{k},\hat \gamma_{k}) = \sum_{a \in \mathcal{A}} \hat \zeta^*_k(a \mid \calI^S_k, \hat g^{*,k})\hat Q^{\hat g^*}_{k+1}(\calI^S_{k+1},a) - \min_{a' \in \mathcal{A}} Q^{*,\hat g^*}_{k+1}(\calI^S_{k+1},a') \nonumber \\
    & = \hspace{-3mm} \sum_{a \in \mathcal{A}^{\hat g^*}_{\text{opt}}(\calI^S_k)} \hspace{-3mm}\hat \zeta^*_k(a \mid \calI^S_{k}, \hat g^{*,k})\hat Q^{\hat g^*}_{k+1}(\calI^S_{k+1},a) - \min_{a' \in \mathcal{A}} Q^{*,\hat{g}^*}_{k+1}(\calI^S_{k+1},a') + \hspace{-6mm}\sum_{\hspace{3mm} a \in \mathcal{A} \setminus \mathcal{A}^{\hat g^*}_{\text{opt}}(\calI^S_k)} \hspace{-4mm}\hat \zeta^*_k(a \mid \calI^S_{k}, \hat g^*)\hat Q^{\hat g^*}_{k+1}(\calI^S_{k+1},a) \nonumber \\
    & =  \Big[ \sum_{a \in \mathcal{A}^{\hat g^*}_{\text{opt}}(\calI^S_k)} \hat \zeta^*_k(a \mid \calI^S_k, \hat g^*)\hat Q^{\hat g^*}_{k+1}(\calI^S_{k+1},a) - \sum_{a \in \mathcal{A}^{\hat g^*}_{\text{opt}}(\calI^S_k)} \hat \zeta^*_k(a \mid \calI^S_k, \hat g^*) \min_{a' \in \mathcal{A}} Q^{*, \hat g^*}_{k+1}(\calI^S_{k+1},a') \Big ]  \nonumber \\
    & \hspace{1cm} +  \Big[ \sum_{a \in \mathcal{A}^{\hat g^*}_{\text{opt}}(\calI^S_k)} \hat \zeta^*_k(a \mid \calI^S_k, \hat g^*)  \min_{a' \in \mathcal{A}} Q^{*, \hat g^*}_{k+1}(\calI^S_{k+1},a') - \min_{a' \in \mathcal{A}} Q^{*, \hat g^*}_{k+1}(\calI^S_{k+1},a') \Big] \nonumber \\
    & \hspace{6mm} + \hspace{-3mm}\sum_{\hspace{3mm} a \in \mathcal{A} \setminus \mathcal{A}^{\hat g^*}_{\text{opt}}(\calI^S_k)} \hspace{-4mm}\hat \zeta^*_k(a \mid \calI^S_{k}, \hat g^*)\hat Q^{\hat g^*}_{k+1}(\calI^S_{k+1},a) \nonumber \\
    & \le 2 \max_{a \in \mathcal{A}^{\hat g^*}_{\text{opt}}(\calI^S_k)}[\hat Q^{\hat g^*}_{k+1}(\calI^S_{k+1},a) - \min_{a' \in \mathcal{A}} Q^{*, \hat g^*}_{k+1}(\calI^S_{k+1},a') ] +  \Big[ \sum_{a \in \mathcal{A}^{\hat g^*}_{\text{opt}}(\calI^S_k)} \hat \zeta^*_k(a \mid \calI^S_k, \hat g^*)  \min_{a' \in \mathcal{A}} Q^{*, \hat g^*}_{k+1}(\calI^S_{k+1},a') \nonumber \\
    & \hspace{1cm} - \min_{a' \in \mathcal{A}} Q^{*, \hat g^*}_{k+1}(\calI^S_{k+1},a') \Big] + \hspace{-3mm}\sum_{\hspace{3mm} a \in \mathcal{A} \setminus \mathcal{A}^{\hat g^*}_{\text{opt}}(\calI^S_k)} \hspace{-4mm}\hat \zeta^*_k(a \mid \calI^S_{k}, \hat g^*)\hat Q^{\hat g^*}_{k+1}(\calI^S_{k+1},a) \nonumber \\
    & \le \epsilon_1(\alpha) +  \Big[ \sum_{a \in \mathcal{A}^{\hat g^*}_{\text{opt}}(\calI^S_k)} \hat \zeta^*_k(a \mid \calI^S_k, \hat g^*)  \min_{a' \in \mathcal{A}} Q^{*, \hat g^*}_{k+1}(\calI^S_{k+1},a') - \min_{a' \in \mathcal{A}} Q^{*, \hat g^*}_{k+1}(\calI^S_{k+1},a') \Big] \nonumber \\
    & \hspace{6mm} + \hspace{-3mm}\sum_{\hspace{3mm} a \in \mathcal{A} \setminus \mathcal{A}^{\hat g^*}_{\text{opt}}(\calI^S_k)} \hspace{-4mm}\hat \zeta^*_k(a \mid \calI^S_{k}, \hat g^*)\hat Q^{\hat g^*}_{k+1}(\calI^S_{k+1},a) \nonumber \\
    & \markthis{(1)}{b:1}{\le} \epsilon_1(\alpha) + \hat \zeta^*_k(a_{\text{sub}} \mid \calI^S_{k}, \hat g^*) \min_{a' \in \mathcal{A}} Q^{*, \hat g^*}_{k+1}(\calI^S_{k+1},a') + \hspace{-3mm}\sum_{\hspace{3mm} a \in \mathcal{A} \setminus \mathcal{A}^{\hat g^*}_{\text{opt}}(\calI^S_k)} \hspace{-4mm}\hat \zeta^*_k(a \mid \calI^S_{k}, \hat g^*)\hat Q^{\hat g^*}_{k+1}(\calI^S_{k+1},a) \nonumber\\
    & \le \epsilon_1(\alpha) + 4(f_k(\|e_k\|^2) + c_k) \frac{1}{1+e^{\alpha |\operatorname{VoI}_{k}|}}
    \leq \epsilon'(\alpha).
\end{align*}

We note that the above is valid for $\operatorname{VoI} \ne 0$. If it is 0, then we have that both the actions of 0 and 1 produce the same cost, and hence, there is no suboptimal action. Thus from \ref{b:1}, we get that 

$\hat{Q}^{\hat g^*}_{k}(\calI^S_{k},\hat \gamma_{k}) - {Q}^{*,\hat g^*}_{k}(\calI^S_{k},\hat \gamma_{k}) \le \epsilon_1(\alpha) < \epsilon'(\alpha)$.

The proof thus follows by the induction principle for all $ k \in [T]$, and is complete.
\end{proof}

\begin{proposition}\label{prop:Second_term_bound}
    It holds that
    \begin{align*}
        J^\omega(\pi^{*},\hat{\zeta}^*,\hat{g}^*) - \inf_{\zeta^d \in \Xi^{dec}} J^\omega(\pi^{*},{\zeta^d},\hat{g}^*) \le \epsilon(\alpha).
    \end{align*}
\end{proposition}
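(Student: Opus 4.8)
The plan is to convert the pointwise bound between the two state-action value functions, established in the Proposition immediately preceding this one, into a bound on the aggregate time-averaged costs, via a dynamic-programming representation. Since the common state-independent term $J^*$ of \eqref{eq:sensing_cost} factors out of every cost in the statement, what remains is the sensor cost-to-go at time $0$. Writing $\hat V_0(\calI^S_0):= \mathbb{E}_{a\sim\hat\zeta^*(\cdot\mid\calI^S_0,\hat g^*)}[\hat Q^{\hat g^*}_0(\calI^S_0,a)]$ for the value function induced by the Boltzmann policy and $V^{*}_0(\calI^S_0):= \min_{a\in\{0,1\}}Q^{*,\hat g^*}_0(\calI^S_0,a)$ for the optimal value, and recalling that the infimum over $\Xi^{dec}$ is attained by the threshold policy $\zeta^*$ of Proposition \ref{Prop:Optimal_policy}, I would first record
\begin{align*}
J(\pi^{*},\hat{\zeta}^*,\hat{g}^*) = J^* + \tfrac{1}{T}\mathbb{E}[\hat V_0(\calI^S_0)], \qquad \inf_{\zeta^d\in\Xi^{dec}} J(\pi^{*},\zeta^d,\hat{g}^*) = J^* + \tfrac{1}{T}\mathbb{E}[V^{*}_0(\calI^S_0)],
\end{align*}
so that the quantity to be bounded is $\tfrac{1}{T}\mathbb{E}[\hat V_0(\calI^S_0) - V^{*}_0(\calI^S_0)]$.

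Next I would split the value gap at time $0$ as
\begin{align*}
\hat V_0(\calI^S_0) - V^{*}_0(\calI^S_0) = \underbrace{\mathbb{E}_{a\sim\hat\zeta^*}[\hat Q^{\hat g^*}_0(\calI^S_0,a) - Q^{*,\hat g^*}_0(\calI^S_0,a)]}_{\textup{(i)}} + \underbrace{\mathbb{E}_{a\sim\hat\zeta^*}[Q^{*,\hat g^*}_0(\calI^S_0,a)] - \min_{a'}Q^{*,\hat g^*}_0(\calI^S_0,a')}_{\textup{(ii)}}.
\end{align*}
Term (i) is controlled directly by the preceding Proposition, which gives $\hat Q^{\hat g^*}_0(\calI^S_0,a) - Q^{*,\hat g^*}_0(\calI^S_0,a)\le \epsilon'(\alpha)$ for the realized Boltzmann action, hence $\textup{(i)}\le\epsilon'(\alpha)$. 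Term (ii) is the suboptimality incurred by sampling from $\hat\zeta^*$ rather than acting greedily on $Q^{*,\hat g^*}_0$; because only a suboptimal action $a_{\mathrm{sub}}$ contributes a nonnegative summand (with binary actions there is at most one), it equals $\hat\zeta^*(a_{\mathrm{sub}}\mid\calI^S_0,\hat g^*)\,[Q^{*,\hat g^*}_0(\calI^S_0,a_{\mathrm{sub}}) - \min_{a'}Q^{*,\hat g^*}_0(\calI^S_0,a')]$. The Boltzmann weight of the disfavored action is exactly $\tfrac{1}{1+e^{\alpha|\operatorname{VoI}_0|}}$, while the bracketed $Q^*$-gap is controlled by the boundedness estimate of Lemma \ref{lem:Q_bounded} (which, as remarked there, also holds for $Q^{*,\hat g^*}$), giving $\textup{(ii)}\le (f_0(\|e_0\|^2)+c_0)\,\tfrac{1}{1+e^{\alpha|\operatorname{VoI}_0|}}$.

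Taking the outer expectation over the initial error and noise and combining the two bounds yields
\begin{align*}
J(\pi^{*},\hat{\zeta}^*,\hat{g}^*) - \inf_{\zeta^d\in\Xi^{dec}} J(\pi^{*},\zeta^d,\hat{g}^*) \le \tfrac{1}{T}\Big(\epsilon'(\alpha) + \mathbb{E}\big[(f_0(\|e_0\|^2)+c_0)\tfrac{1}{1+e^{\alpha|\operatorname{VoI}_0|}}\big]\Big) = \mathcal{O}\Big(\tfrac{1}{1+\min_{k\in[T]}\{e^{\alpha|\operatorname{VoI}_k|}\}}\Big) =: \epsilon(\alpha),
\end{align*}
which is the claim. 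The step I expect to be the main obstacle is making term (ii) rigorous: the $Q^*$-gap is bounded only by the random, error-dependent quantity $f_0(\|e_0\|^2)+c_0$, so one must argue that its expectation against the Boltzmann weight is finite and of the stated order. This relies on the finite horizon together with uniformly bounded second moments of the estimation error $e_k$ (which follow from the Gaussian noise of covariance $K_W$ and the recursion \eqref{eq:est_error}), and on replacing $|\operatorname{VoI}_0|$ by the worst case $\min_{k}|\operatorname{VoI}_k|$ to obtain a uniform $\alpha$-rate; a minor additional care is needed at states where $\operatorname{VoI}_0=0$, where both actions are optimal and term (ii) vanishes.
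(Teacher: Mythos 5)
Your proposal is correct, and its first half is exactly the paper's argument: the paper also reduces the cost gap to the time-$0$ state-action value functions and splits it as $\mathbb E[\hat Q_0(\hat\gamma_0)-Q^*_0(\hat\gamma_0)] + \mathbb E[Q^*_0(\hat\gamma_0)-Q^*_0(\gamma^*_0)]$, bounding the first difference by $\epsilon'(\alpha)$ via the preceding proposition, just as you do with your term (i). Where you genuinely diverge is the second difference. The paper handles it by unrolling the $Q^*$ recursion one step: it writes out the stage costs, couples the noise realizations, splits on the event $\{\hat\gamma_0\neq\gamma^*_0\}$, and then invokes Lemma \ref{lem:Q_bounded} to control the resulting $Q_1^*$ discrepancy between the two error trajectories $\hat e_1$ and $e^*_1$, accumulating the factor $\bigl(\lambda + tr(\Gamma_1 Ae_0e_0^\top A^\top)+2f_0(\|e_0\|^2)+2c_0\bigr)$ against the mismatch probabilities. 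Your treatment of term (ii) is more elementary and cleaner: since both $Q^*_0$ evaluations share the same information state $\calI^S_0$, the gap is supported entirely on the event that the Boltzmann policy samples the (unique, for binary actions) suboptimal action, whose weight is exactly $\tfrac{1}{1+e^{\alpha|\operatorname{VoI}_0|}}$, and the bracketed $Q^*$-gap is bounded by Lemma \ref{lem:Q_bounded} (indeed it equals $|\operatorname{VoI}_0|$, which would give an even sharper bound $\tfrac{|\operatorname{VoI}_0|}{1+e^{\alpha|\operatorname{VoI}_0|}}$). This avoids the paper's coupling construction and the bookkeeping over mismatched error trajectories entirely, at no loss in the final order $\mathcal{O}\bigl(\tfrac{1}{1+\min_{k\in[T]}\{e^{\alpha|\operatorname{VoI}_k|}\}}\bigr)$; what the paper's longer route exhibits explicitly, and yours keeps implicit, is the dependence of the constant on the one-step stage costs, but since both arguments ultimately rest on the same Lemma \ref{lem:Q_bounded} bound, nothing is lost. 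Your closing caveats (finite moments of $f_0(\|e_0\|^2)+c_0$, the $\operatorname{VoI}_0=0$ case, passing to the worst case over $k$) are handled at the same level of rigor as in the paper's own proof.
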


\begin{proof}

Let $\zeta^* = \text{arginf}_{\zeta^d \in \Xi^{dec}} J^\omega(\pi^*,\zeta^d, \hat g^*)$ and $\gamma^*$ be an action chosen from $\zeta^*$.
   \begin{align}
        & J^\omega(\pi^*,\hat{\zeta}^*,\hat g^*) - J^\omega(\pi^*,{\zeta}^*,\hat g^*)  \nonumber \\
        & = \mathbb E [\hat{Q}_0(e_0,\hat{\gamma}_0,\hat{g}^*)] - \mathbb E [{Q}^*_0(e_0,{\gamma}^*_0,\hat{g}^*)] \nonumber \\
        & = \mathbb E [\hat{Q}_0(e_0,\hat{\gamma}_0,\hat{g}^*)] - \mathbb E [{Q}^*_0(e_0,\hat{\gamma}_0,\hat{g}^*)]  \nonumber \\
        & \hspace{1cm} + \mathbb E [{Q}^*_0(e_0,\hat{\gamma}_0,\hat{g}^*)] - \mathbb E [{Q}^*_0(e_0,{\gamma}^*_0,\hat{g}^*)] \nonumber \\
        & \le \epsilon'(\alpha) + \mathbb E [{Q}^*_0(e_0,\hat{\gamma}_0,\hat{g}^*)] - \mathbb E [{Q}^*_0(e_0,{\gamma}^*_0,\hat{g}^*)] \nonumber \\
        & = \epsilon'(\alpha) + \mathbb E [\lambda \hat{g}^*_0 (\hat{\gamma}_0 - \gamma^*_0) + \hat{e}_1^\top\Gamma_1\hat{e}_1 - (e^*_1)^\top \Gamma_1 e^*_1 \nonumber \\
        & \hspace{1cm} + Q_1^*(\hat{e}_1,\gamma_1^*,\hat{g}^*) - Q_1^*( e^*_1,\gamma_1^*,\hat{g}^*)] \nonumber \\
        & = \epsilon'(\alpha) + \mathbb E [ \lambda \hat g^*_0 (\hat \gamma_0 - \gamma^*_0)] + tr(\Gamma_1 \mathbb E [( \gamma^*_0 - \hat \gamma_0)A  e_0 e_0^\top A^\top]) \nonumber \\
        & \hspace{1cm} + \mathbb E [Q_1^*((1-\hat \gamma_0)Ae_0 + \hat W_0,\gamma^*_1,\hat{g}^*) \nonumber \\
        & \hspace{1cm} - Q_1^*((1- \gamma^*_0)Ae_0 + \tilde W_0,\gamma^*_1,\hat{g}^*] \nonumber \\
        & \le \epsilon'(\alpha) + \mathbb E [ \lambda \hat g^*_0 (\hat \gamma_0 - \gamma^*_0)] + tr(\Gamma_1 Ae_0e_0^\top A^\top)\mathbb E [( \gamma^*_0 - \hat \gamma_0)] \nonumber \\
        & \hspace{1cm} + 2(f_0(\|e_0\|^2) + c_0) \mathbb P(\gamma^*_0 \neq \hat \gamma_0) \nonumber \\
        & \hspace{1cm} + \mathbb P( \gamma^*_0 = \hat \gamma_0) (\mathbb E [Q_1^*((1-\hat \gamma_0)Ae_0 + \hat W_0,\gamma^*_1,\hat{g}^*) \nonumber \\
        & \hspace{1cm} - Q_1^*((1-\hat \gamma_0)Ae_0 + \check W_0,\gamma^*_1,\hat{g}^*)] )\nonumber \\
         & \le \epsilon'(\alpha) + (\lambda + tr(\Gamma_1 Ae_0e_0^\top A^\top)) \mathbb E [( \gamma^*_0 - \hat \gamma_0)] \nonumber \\
         & \hspace{1cm} + 2(f_0(\|e_0\|^2) + c_0) \mathbb P( \gamma^*_0 \neq \hat \gamma_0) \nonumber \\ 
         & \leq \epsilon'(\alpha) + (\lambda + tr(\Gamma_1 Ae_0 e_0^\top A^\top)+ 2f_0(\|e_0\|^2) + 2c_0) \nonumber \\
         & \hspace{1cm} \times \Big(\frac{1}{1+ e^{\alpha |\operatorname{VoI}_{0,1}|}} + \frac{1}{1+ e^{\alpha |\operatorname{VoI}_{0,2}|}}\Big) \nonumber \\
         & =: \epsilon'(\alpha) + \epsilon''(\alpha) \nonumber \\
         & =: \epsilon(\alpha) = \mathcal{O} \Big( \frac{1}{1+ \min_{k \in [\bar T]} \{e^{\alpha |\operatorname{VoI}_k|}\}}\Big).
    \end{align}
The proof is thus complete.
\end{proof}

 \end{document}